\newcommand{\R}{\mathbb{R}}
\newcommand{\C}{\mathbb{C}}
\newcommand{\T}{\mathbb{T}}
\newcommand{\Q}{\mathbb{Q}}
\newcommand{\Z}{\mathbb{Z}}
\newcommand{\N}{\mathbb{N}}
\newtheorem{theorem}{Theorem}[section]
\newtheorem*{theorem*}{Theorem}
\newtheorem{proposition}[theorem]{Proposition}
\newtheorem{lemma}[theorem]{Lemma}
\newtheorem{corollary}[theorem]{Corollary}
\newtheorem*{corollary*}{Corollary}
\newtheorem{question}[theorem]{Question}
\newtheorem{conjecture}[theorem]{Conjecture}
\newenvironment{customprop}[1]
  {\innercustomprop}
  {\endinnercustomprop}
\theoremstyle{definition}
\newtheorem*{definition*}{Definition}
\newtheorem{definition}[theorem]{Definition}
\theoremstyle{remark}
\newtheorem{example}[theorem]{Example}
\newtheorem*{remark*}{Remark}
\newenvironment{customexample}[1]
  {\innercustomexample}
  {\endinnercustomexample}
\begin{document}
\author{Anh N. Le}
\address{Department of Mathematics\\
  Northwestern University\\
  2033 Sheridan Road, Evanston, IL 60208-2730, USA}
  
\email{anhle@math.northwestern.edu}

\title{Interpolation sets and nilsequences}
\maketitle

\begin{abstract}
    To give positive answer to a question of Frantzikinakis, we study a class of subsets of $\mathbb{N}$, called interpolation sets, on which every bounded sequence can be extended to an almost periodic sequence on $\mathbb{N}$. Strzelecki has proved that lacunary sets are interpolation sets. We prove that sets that are denser than all lacunary sets cannot be interpolation sets. We also extend the notion of interpolation sets to nilsequences and show that the analogue to Frantzikinakis' question for arbitrary sequences is false.
\end{abstract}

\section{Introduction}

\subsection{Motivation}
    \label{sec:motivation}
    Let $(X, \mathcal{B}, \mu, T)$ be a measure preserving system and $f_0, f_1, \ldots, f_k$ bounded functions on $X$. The sequence $a(n) = \int_X f_0(x) f_1(T^n x) \ldots f_k(T^{kn}x) \, d\mu(x)$ is called a \emph{$k$-multiple correlation}. Multiple correlations arise implicitly in Furstenberg's proof of Szemer\'edi's theorem, in which all $f_i$ are indicator functions of a set with positive measure \cite{Furstenberg77}. They are then defined and studied formally by Bergelson, Host and Kra in \cite{Bergelson_Host_Kra05}. Through what is now called the correspondence principle, these sequences capture the intersection of some translates of dense subsets of integers.
    
    To understand the structure of multiple correlations, Bergelson, Host and Kra \cite{Bergelson_Host_Kra05} introduced the notion of nilsequences. These sequences are obtained by evaluating continuous functions along the orbits in homogeneous spaces of nilpotent Lie groups (see \cref{sec:preliminary_nilsequences} for the precise definition). Among other things, they prove: Given a $k$-multiple correlation $(a(n))_{n \in \N}$ defined via an ergodic system $(X, \mathcal{B}, \mu, T)$, there exists a uniform limit of $k$-step nilsequences $(\psi(n))_{n \in \N}$ such that
    \begin{equation}
    \label{eq:mar-19-1}
        \lim_{N - M \to \infty} \frac{1}{N-M} \sum_{n = M}^{N-1} |a(n) - \psi(n)| = 0  
    \end{equation}
    This result is extended to general measure preserving systems (not necessarily ergodic) by Leibman \cite{Leibman15}. Inspired by these results, Frantzikinakis asks the following question:
    \begin{question}{\cite[Question 13]{frant17}} 
    \label{ques:mar-14-1}
        Let $r_n = p_n$ (n-th prime), $\lfloor n^c \rfloor$ for some $c > 0$, or $2^n$. Is it true that for any $k$-multiple correlation $(a(n))_{n \in \N}$, there exists a uniform limit of $k$-step nilsequence $(\psi(n))_{n \in \N}$ such that 
        \begin{equation*}
            \lim_{N \to \infty} \frac{1}{N} \sum_{n=1}^{N} |a(r_n) - \psi(r_n)| = 0?
        \end{equation*}
    \end{question}
    In \cite{Le17}, we give affirmative answers for $r_n = p_n$ and $\lfloor n^c \rfloor$ (The case of the primes $p_n$ is also proved by Tao and Ter\"av\"ainen \cite{tao-teravainen17}). Positive answers for these two sequences are expected because they share important properties with the full sequence of natural numbers $r_n = n$. 
    More specifically, we can use the Host-Kra Structure Theorem \cite{Host_Kra05,Ziegler07} to project the averages of multiple correlations along these two sequences to the nilfactors without affecting the averages. Furthermore, the orbits of totally ergodic nilrotations along these sequences are equidistributed on the entire nilmanifolds (see \cite{Le17} for details).
    
    On the other hand, it is easy to see that $(2^n)_{n \in \N}$ does not have these properties (see \cref{subsec:2^n}). Therefore the answer to \cref{ques:mar-14-1} for $(2^n)_{n \in \N}$, if affirmative, requires a different technique. Indeed we are able to show that one can obtain any bounded sequence by evaluating an almost periodic sequence (uniform limit of $1$-step nilsequences) along $(2^n)_{n \in \N}$, making the positive answer for this sequence vacuous. 
    
    Motivated by this result, we proceed to look for all sequences sharing this property with $(2^n)_{n \in \N}$. However, it turns out that this seemingly ergodic theoretical question was answered by harmonic analysts in the 1960's, and sequences with this property have been extensively studied under the name of interpolation sets \cite{Strzelecki_1963, Hartman_Ryll-Nardzewski_1964, Ryll-Nardzewski_1964, Kunen_Rudin_1999, Graham_Hare_2013}.
    
    Before going further, we remark that for a multiple correlation $(a(n))_{n \in \N}$, there exists a unique uniform limit of nilsequences $(\psi_a(n))_{n \in \N}$ satisfying \eqref{eq:mar-19-1}. Hence there is a question related to \cref{ques:mar-14-1}:  Is it true that $\lim_{N \to \infty} \frac{1}{N} \sum_{n=1}^N |a(r_n) - \psi_a(r_n)| = 0$ for $r_n = p_n, \lfloor n^c \rfloor$ or $2^n$? This question is answered affirmatively for $(p_n)_{n \in \N}$ and $(\lfloor n^c \rfloor)_{n \in \N}$ in \cite{Le17}. However, it is negative for $(2^n)_{n \in \N}$, which is in contrast with the answer for \cref{ques:mar-14-1}.
    
    \subsection{Interpolation sets for almost periodic sequences}
    \begin{definition*}
        A set $E = \{r_n\}_{n\in \N} \subset \N$ with $r_1 < r_2 < \ldots$ is called an \emph{$I_0$ set} (or \emph{interpolation set}) if for every bounded sequence $(b(n))_{n \in \N}$, there exists an almost periodic sequence $(\psi(n))_{n \in \N}$ such that $\psi(r_n) = b(n)$ for all $n \in \N$.
        
        In other word, every bounded sequence on $E$ can be extended to an almost periodic sequence on $\N$. 
    \end{definition*}
    
    The set $\{2^n\}_{n \in \N}$ is $I_0$ and this is a corollary of a result for lacunary sets. A set $E = \{r_n: n \in \N\} \subset \N$ with $r_1 < r_2 <\ldots$ is called \emph{lacunary} (or \emph{Hadamard}) if $\inf_{n \in \N} r_{n+1}/r_n > 1$ for all $n \in \N$. Strzelecki \cite{Strzelecki_1963} proved that lacunary sets are $I_0$. Hartman-Ryll-Nardzewski characterization states that $E$ is $I_0$ if and only if disjoint subsets of $E$ have disjoint closures in the Bohr compactification of $\Z$ \cite{Hartman_Ryll-Nardzewski_1964}. Equivalently, for every $A, B \subset E$ disjoint, the difference $A-B$ is not a set of Bohr recurrence (see \cref{sec:sets_of_bohr_recurrence} for the definition of sets of Bohr recurrence). This gives examples of $I_0$ sets that are not lacunary but finite unions of lacunary sets, such as $\{2^n\}_{n \in \N} \cup \{2^n + 1\}_{n \in \N}$. Grow \cite{Grow_1987} constructed a class of $I_0$ sets which are not finite unions of lacunary sets, for example, $\{3^{n^2} + 3^j: n \geq 1, (n-1)^2 \leq j \leq n^2\}$ (see also M\'ela \cite{Mela_1969}). 
     
    All examples of $I_0$ sets above are very sparse; in particular, they are all derived from lacunary sets. One may wonder if there exists an $I_0$ set that has polynomial growth? The only result in this direction seems to be by Hartman \cite{Hartman_1961} in which he observes that the sets $\{n^k\}_{n \in \N}$ with $k \in \N$ cannot be $I_0$ using Weyl equidistribution. In this paper, we fill in the gap by showing the following: A set $E = \{r_n\}_{n \in \N} \subset \N$ with $r_1 < r_2 < \ldots$ is called \emph{denser than lacunaries} if $\lim_{n \to \infty} r_n/s_n = 0$ for every lacunary set $\{s_n\}_{n \in \N}$ with $s_1 < s_2 < \ldots$.
    In \cref{sec:denser_than_lacunaries}, we prove:
    \begin{theorem}
    \label{thm:may-8-1}
        Sets that are denser than lacunaries are not $I_0$.
    \end{theorem}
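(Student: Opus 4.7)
The plan is to invoke the Hartman--Ryll-Nardzewski characterization: $E$ is $I_0$ if and only if for every pair of disjoint subsets $A, B \subseteq E$ the difference set $A - B$ is not a set of Bohr recurrence. Accordingly, it suffices to exhibit disjoint $A, B \subseteq E$ such that $A - B$ is a set of Bohr recurrence, and then apply the characterization.

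First I would establish a \emph{cluster lemma}. I note that $E = \{r_n\}$ being denser than lacunaries is equivalent to $\log r_n / n \to 0$, by testing against the lacunary sequences $s_n = \lceil q^n \rceil$ for each $q > 1$. By contrapositive this yields: for every $\delta > 0$ and every $K \in \N$, there exist arbitrarily large $M$ with $|E \cap [M, (1+\delta) M]| \geq K$. Indeed, if such clusters failed to exist for some $\delta, K$, iterating the failure would force the lacunary lower bound $r_n \geq c (1+\delta)^{n/K}$.

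The pigeonhole step is then: for any Bohr neighborhood $V = V_{F, \epsilon}$ of $0$ in $b\Z$ with $F = \{\beta_1, \ldots, \beta_m\} \subset \T$, a cluster of $K > (2/\epsilon)^m$ elements of $E$, mapped into $\T^m$ via $n \mapsto (n\beta_1, \ldots, n\beta_m)$, yields by pigeonhole on boxes of side $\epsilon/2$ a pair of distinct elements $a, b$ with $a - b \in V$. To assemble $A$ and $B$, I would enumerate a countable dense sequence $(\alpha_k)_{k \geq 1} \subset \T$ and inductively, at stage $j$, select a fresh cluster interval $I_j$ (disjoint from all previously chosen elements) containing enough points of $E$ to apply the pigeonhole to the refined neighborhood $V_j := V_{\{\alpha_1, \ldots, \alpha_j\}, 1/j}$, also pigeonholing modulo $j!$ so that $j! \mid (a_j - b_j)$ (to handle all rational characters). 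This produces a pair $(a_j, b_j) \in E \cap I_j$; set $A = \{a_j\}_{j \geq 1}$ and $B = \{b_j\}_{j \geq 1}$, disjoint by the separation of the cluster intervals.

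The hard part will be verifying that $A - B$ is Bohr recurrent with respect to \emph{every} Bohr neighborhood of $0$ in $b\Z$, not merely the $V_j$ appearing in the enumeration, since $b\Z$ is not first-countable at $0$. I expect this to be handled by coupling the approximation rate of $(\alpha_k)$ in $\T$ to the cluster widths $\delta_j M_j$: for an arbitrary finite set of characters in $\T$ and a given tolerance, one approximates the characters by members of $\{\alpha_1, \ldots, \alpha_j\}$ for $j$ so large that the approximation error against the scale $|a_j - b_j|$ is negligible, with the divisibility of $a_j - b_j$ by $j!$ absorbing all rational approximations. Once in hand, Hartman--Ryll-Nardzewski immediately yields that $E$ is not $I_0$.
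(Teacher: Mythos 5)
Your reduction is sound (by \cref{thm:dec-31-1} and \cref{prop:dec-31-2} it suffices to produce disjoint $A,B\subset E$ with $A-B$ a set of Bohr recurrence), the equivalence of ``denser than lacunaries'' with $\log r_n/n\to 0$ is correct, and so are the cluster lemma and the pigeonhole step for a \emph{fixed} finite set of characters. But the step you yourself flag as ``the hard part'' is not merely hard: it is where the argument fails, and no coupling of the net $(\alpha_k)$ to the cluster widths can repair it. The error incurred when you replace an arbitrary $\beta\in\T$ by a net point $\alpha_i$ is $|a_j-b_j|\cdot\|\beta-\alpha_i\|$, so at stage $j$ you need a net of mesh about $\eta/|a_j-b_j|$, i.e.\ of cardinality about $|a_j-b_j|/\eta$; the pigeonhole then demands a cluster containing at least $(2/\epsilon)^{|a_j-b_j|/\eta}$ points of $E$. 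But a cluster inside $[M,(1+\delta)M]$ contains at most $\delta M+1\geq |a_j-b_j|$ points --- and for typical denser-than-lacunary sets far fewer (for $r_n=\lfloor 2^{\sqrt n}\rfloor$ a dyadic window at height $M$ holds only $O(\log M)$ points). So the number of points available is at most linear in $|a_j-b_j|$ while the requirement is exponential in it; enlarging the cluster only pushes $M_j$, and hence $|a_j-b_j|$, further up. The circularity is unbreakable. Nor can you wave this away: a single pair per stage leaves, for each $j$, a set of $\beta$ with $\|(a_j-b_j)\beta\|\geq 1/4$ of measure $1/2$, and if the differences $a_j-b_j$ happen to grow lacunarily (which your construction does not preclude) the intersection of these bad sets is nonempty --- this is precisely the mechanism by which lacunary sets \emph{are} $I_0$, so any correct proof must exploit density beyond ``one good pair per finite character set.''

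The missing idea is a statement that is uniform in $\alpha$ for a single \emph{finite} pair of sets: for every $\epsilon>0$ and $d$, there exist disjoint finite $A_k,B_k\subset E$ such that for \emph{every} $\alpha\in\T^d$ some pair $a\in A_k$, $b\in B_k$ (depending on $\alpha$) satisfies $\operatorname{dist}(a\alpha,b\alpha)<\epsilon$. The paper obtains this (\cref{lem:dec-26-1}) by a counting argument rather than pigeonhole on a net: if every partition of $F=\{r_1,\dots,r_N\}$ into two parts were $(\epsilon,d)$-separable, then, since the partition realized by a given $\alpha$ is constant on each of fewer than $(N(N-1)r_N)^d$ regions of $\T^d$ cut out by ``critical faces,'' and each $\alpha$ realizes at most $2^{(1/\epsilon)^d}$ partitions, the total count is at most $2^{(1/\epsilon)^d}(N(N-1)r_N)^d$, which is eventually smaller than $2^N$ exactly when $\log r_N=o(N)$. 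Taking unions of such pairs over $k$ (with parameters $(1/k,k)$) then yields non-separable infinite $A,B$. Without an input of this ``entropy versus number of separation patterns'' type, your construction does not establish Bohr recurrence of $A-B$, and the proposal as written has a genuine gap.
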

    This together with Strzelecki's Theorem \cite{Strzelecki_1963} almost characterize all $I_0$ sets. For the sets that do not fall into these two categories, their behavior seems to depend on some delicate arithmetic property rather than just density, as following example points out:
    \begin{example}
    \label{example:may-9-1}
        The set $\{2^n\}_{n \in \N} \cup \{2^n + 2n - 1\}_{n \in \N}$ is  $I_0$, but $\{2^n\}_{n \in \N} \cup \{2^n + 2n\}_{n \in \N}$ is not $I_0$.
    \end{example}
    This example is an immediate corollary of Hartman-Ryll-Nardzewski characterization and Strzelecki's Theorem. As our method and motivation are different, we also prove it in \cref{sec:some_properties}.
    
    A desired property for $I_0$ sets is that they are stable under union with finite sets. In an effort to prove this, we show:
    \begin{proposition}
    \label{prop:may-8-2}
        A set of Bohr recurrence can be partitioned into two sets of Bohr recurrence.
    \end{proposition}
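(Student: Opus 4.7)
The plan is to first establish an infinitude lemma for intersections of $R$ with Bohr neighborhoods of $0$, and then to construct the desired partition by a transfinite recursion indexed over those neighborhoods.

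First I would verify that for every Bohr neighborhood $U$ of $0$ in $\Z$, the set $R \cap U$ is infinite and is itself a set of Bohr recurrence. Suppose for contradiction that $R \cap U = \{n_1, \ldots, n_k\}$ is finite; choose $\alpha \in \R$ such that $\|n_i \alpha\| > 0$ for every $i$ (for instance, $\alpha$ irrational in general position with respect to the $n_i$), and fix $\delta > 0$ with $\|n_i \alpha\| \geq \delta$ for all $i$. Then $U' := U \cap \{n : \|n\alpha\| < \delta\}$ is a Bohr neighborhood of $0$ disjoint from $\{n_1, \ldots, n_k\}$, so $R \cap U' = \emptyset$, contradicting Bohr recurrence. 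Since the intersection of two Bohr neighborhoods of $0$ is again a Bohr neighborhood of $0$, the fact that $R \cap U$ is itself Bohr recurrent follows at once: $(R \cap U) \cap V = R \cap (U \cap V)$ is infinite for every Bohr neighborhood $V$ of $0$.

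For the main construction, I would well-order the Bohr neighborhoods of $0$ as $(U_\alpha)_{\alpha < \kappa}$ and build $R_1, R_2$ by transfinite recursion, maintaining disjointness throughout. At stage $\alpha$: if $R_1^{<\alpha} \cap U_\alpha = \emptyset$, pick any previously unassigned element $r \in R \cap U_\alpha$ and place it into $R_1$; proceed symmetrically for $R_2$. Stages where both halves already meet $U_\alpha$ require no action. After the recursion completes, distribute the remaining elements of $R$ arbitrarily between $R_1$ and $R_2$. By construction, each $R_i$ meets every $U_\alpha$, ensuring both are sets of Bohr recurrence.

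The main obstacle is cardinality management: each $R \cap U_\alpha$ is only countable while $\kappa$ can be as large as $\mathfrak{c}$, so naively the recursion might exhaust $R \cap U_\alpha$ at late stages. The key observation that resolves this is that once an element $r$ is placed into $R_1$, it witnesses $R_1 \cap V \neq \emptyset$ for every Bohr neighborhood $V$ of $0$ containing $r$, not just for the $U_\alpha$ being processed; hence many stages require no fresh assignment. A careful bookkeeping---for instance, at stage $\alpha$ selecting $r$ from $R \cap U_\alpha \cap W_1 \cap \cdots \cap W_m$, where $W_1, \ldots, W_m$ are the finitely many previously encountered neighborhoods that still lack a witness on the relevant side, which is nonempty by the filter property and the infinitude lemma---reduces the genuinely active stages to a manageable subfamily. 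An alternative route that sidesteps the cardinality bookkeeping is to reformulate the question via the Stone extension $\iota_*$ of the embedding $\iota : \Z \to b\Z$: the partition exists if and only if the closed set $\iota_*^{-1}(\{0\}) \cap \widehat{R}$ in $\beta\Z$ contains at least two ultrafilters, which can be shown using the semigroup structure of $\beta\Z$ and the infinitude lemma applied iteratively.
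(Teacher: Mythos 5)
Your opening lemma is correct and is a sensible starting point: removing finitely many elements from a set of Bohr recurrence leaves a set of Bohr recurrence, so $R\cap U$ is infinite for every Bohr neighborhood $U$ of $0$, and the filter property gives that $R\cap U$ is again Bohr recurrent. The main construction, however, never gets past the obstacle you yourself flag, and that obstacle is fatal as stated. The family of (basic) Bohr neighborhoods of $0$ has cardinality $\mathfrak{c}$ while $R$ is countable, so a recursion that consumes a fresh element of $R$ at each active stage can have only countably many active stages, and nothing in your argument guarantees this. The bookkeeping you sketch does not repair it: in your recursion every $U_\beta$ with $\beta<\alpha$ already has witnesses on both sides by stage $\alpha$, so there are no ``previously encountered neighborhoods that still lack a witness'' to intersect with, and the genuine danger --- that by stage $\alpha$ the entire countable set $R\cap U_\alpha$ has already been assigned to $R_2$, leaving nothing to place into $R_1$ --- is untouched. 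The alternative via $\beta\Z$ is a faithful reformulation ($R$ splits into two Bohr recurrent pieces if and only if $\iota_*^{-1}(0)\cap\overline{R}$ contains two ultrafilters, equivalently if and only if the family of Bohr recurrent subsets of $R$ fails to be an ultrafilter on $R$), but that is precisely the statement to be proved, and appealing to ``the semigroup structure of $\beta\Z$'' is not an argument.

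The missing idea is a compactness step that collapses the uncountable index set to a countable one before any recursion begins. The paper first proves (\cref{lem:nov-29-1}) that for each fixed torus $\T^d$ and each $\epsilon>0$ there is a single $N=N(R,\T^d,\epsilon)$ such that the finite block $R\cap[N]$ contains an $\epsilon$-return time for \emph{every} $\alpha\in\T^d$ simultaneously; this follows because a sequence of putative counterexamples $\alpha_N$ accumulates, by compactness of $\T^d$, at a point $\alpha$ at which $R$ would fail to recur. One then has only countably many constraints, indexed by the pairs $(\T^d,1/n)$, each satisfiable by a finite block of $R$; peeling off pairwise disjoint finite blocks alternately for $A$ and for $B$ (the remainder staying Bohr recurrent since only finitely many elements are removed at each step) yields the two disjoint Bohr recurrent sets by an ordinary induction over $\N$, with no transfinite recursion and no exhaustion problem. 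Without some such uniformization, the approach you propose does not go through.
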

    As a corollary, we have a different proof of Ryll-Nardzewski's result.
    \begin{theorem}[Ryll-Nardzewski \cite{Ryll-Nardzewski_1964}, see also M\'ela \cite{Mela_1968}, Ramsey \cite{Ramsey_1980}]
        \label{thm:may-8-3}
        The union of an $I_0$ set with a finite set is $I_0$.
    \end{theorem}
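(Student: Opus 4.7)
My plan is to induct on $|F|$ to reduce to the case $F = \{m\}$ with $m \in \N \setminus E$, and then combine the Hartman--Ryll-Nardzewski characterization with \cref{prop:may-8-2}. By that characterization, $E \cup \{m\}$ is $I_0$ iff for every disjoint pair $A, B \subset E \cup \{m\}$, the set $A - B$ is not a set of Bohr recurrence. The case $m \notin A \cup B$ follows from $I_0$-ness of $E$. Since Bohr recurrence is preserved under negation, by symmetry I may assume $m \in A$, and write $A = A' \sqcup \{m\}$ with $A' \subset E$. Then
\[
A - B = (A' - B) \cup (m - B).
\]
A union of two sets that are not of Bohr recurrence is again not of Bohr recurrence: if one witnesses each side with a finite tuple of characters and a radius, concatenating the tuples and taking the minimum radius witnesses the union. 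So it suffices to prove that each of $A' - B$ and $m - B$ is not a set of Bohr recurrence.

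The first, $A' - B$, is a difference of disjoint subsets of $E$, so the $I_0$-property of $E$ applies directly.

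For $m - B$ I argue by contradiction. Assume $m - B$ is a set of Bohr recurrence; then so is $B - m$ by the negation closure just noted. Applying \cref{prop:may-8-2}, partition $B - m = S_1 \sqcup S_2$ into two sets of Bohr recurrence. Translating produces a partition $B = B_1 \sqcup B_2$ with $B_i := S_i + m \subset E$, and $B_1 - B_2 = S_1 - S_2$. I claim $S_1 - S_2$ is a set of Bohr recurrence: given characters $\alpha_1, \ldots, \alpha_k$ and $\epsilon > 0$, use Bohr recurrence of each $S_i$ to pick nonzero $s_i \in S_i$ with $\|s_i \alpha_j\| < \epsilon/2$ for all $j$; then $s_1 - s_2 \in S_1 - S_2$ is nonzero (since $S_1 \cap S_2 = \emptyset$) and satisfies $\|(s_1 - s_2)\alpha_j\| < \epsilon$. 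Hence $B_1 - B_2$ is of Bohr recurrence, contradicting the $I_0$-property of $E$ applied to the disjoint subsets $B_1, B_2 \subset E$.

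The main obstacle is encapsulated in \cref{prop:may-8-2}: without the ability to split a Bohr recurrence set into two such, one has no mechanism to produce the disjoint pair $B_1, B_2 \subset E$ whose difference witnesses the desired contradiction. Everything else is bookkeeping with the Hartman--Ryll-Nardzewski characterization.
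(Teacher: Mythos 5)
Your proof is correct and takes essentially the same route as the paper: the engine in both is to assume the relevant translate of a subset of $E$ is a set of Bohr recurrence, split it into two such sets via \cref{prop:may-8-2}, translate back into $E$, and contradict the Hartman--Ryll-Nardzewski characterization via the fact that a difference of two Bohr recurrence sets is again one. The only difference is cosmetic: you verify the separability condition for $E \cup \{m\}$ directly, inlining the content of \cref{lem:dec-31-3} and \cref{lem:dec-31-4} and applying the splitting to $B - m$ for each $B \subset E$, whereas the paper cites those lemmas and applies the splitting once to $E - \{m\}$.
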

    \cref{prop:may-8-2} and \cref{thm:may-8-3} are proved in \cref{sec:union_with_finite_set}.
    
    \subsection{Interpolation sets for nilsequences}
    
    Almost periodic sequences can be described as uniform limits of $1$-step nilsequences (see \cref{sec:preliminary_nilsequences} for the definition). Therefore, a natural question is to what extent the notion of $I_0$ sets extend to nilsequences? In \cref{sec:nilsequences}, we begin a study of this question. 
    \begin{definition*}
        For $k \in \N$, a set $E = \{r_n\}_{n\in \N} \subset \N$ with $r_1 < r_2 < \ldots$ is called a \emph{$k$-step-$I_0$ set} if for every bounded sequence $(b(n))_{n \in \N}$, there exists a uniform limit of $k$-step nilsequences $(\psi(n))_{n \in \N}$ such that $\psi(r_n) = b(n)$ for all $n \in \N$.
    \end{definition*}
    Similar to Hartman-Ryll-Nardzewski characterization, a set $E$ is $k$-step-$I_0$ if and only if any two disjoint subsets $A$ and $B$ of $E$ are separable by some $k$-step nilrotation (see \cref{sec:nessary-sufficient-nilsequences}). However, since nilrotations are not isometries in general, it cannot be concluded that $A - B$ is not a set of nil-recurrence. Therefore most of the results for almost periodic sequences do not easily carry over to nilsequences.
    
    To side step this problem, we look at a different aspect of nilsequences: averages. Inherited from the equidistribution property of nilrotations, nilsequences have nice properties with respect to averaging. For example, by Leibman \cite{leib05}, for a nilsequence $(\psi(n))_{n \in \N}$ and a polynomial $P \in \mathbb{Q}[n]$ taking integer values on $\Z$, the uniform average
    \begin{equation}
    \label{eq:mar-20-1}
        \lim_{N-M \to \infty} \frac{1}{N-M} \sum_{n=M}^{N-1} \psi(P(n))
    \end{equation}
    exists. Since not all bounded sequences have uniform averages, \eqref{eq:mar-20-1} shows that every polynomial set is not $k$-step-$I_0$ for any $k \in \N$. In \cref{sec:dense_subset_of_polynomial}, we extend this result to  positively dense subsets of polynomial sets using a lemma of Moreira, Richter and Robertson \cite{moreira-richter-robertson}. More specifically,
    \begin{definition*}
        Let $F = \{s_{n_i}\}_{i \in \N} \subset E = \{s_n\}_{n \in \N} \subset \N$. The upper density of $F$ relative to $E$ is defined to be
        \[
            \bar{d}_E(F) = \limsup_{N - M \to \infty} \frac{|\{n_i: i \in \N\} \cap \{M + 1, M + 2, \ldots, N\}|}{N - M}
        \]
    \end{definition*}
    \begin{theorem}
    \label{prop:jan-12-1}
        Let $E = \{P(n)\}_{n \in \N} \subset \N$ where $P \in \Q[n]$ non-constant and taking integer values on $\Z$. Then every subset of $E$ with positive relative upper density is not $k$-step-$I_0$ for any $k \in \N$. 
    \end{theorem}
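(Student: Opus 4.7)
The plan is to reduce the problem, via Leibman's polynomial-closure theorem for nilsequences, to a statement about positive-density subsets of $\N$, and then to close that off with the averaging lemma of Moreira, Richter, and Robertson. Parametrize $F = \{P(n_i)\}_{i \in \N}$, so that $\bar d(\{n_i\}) = \bar d_E(F) =: \delta > 0$. Suppose for contradiction that $F$ is $k$-step-$I_0$ for some $k$. Then for every bounded $b : \N \to \R$ there is a uniform limit of $k$-step nilsequences $\psi$ with $\psi(P(n_i)) = b(i)$. Leibman's theorem \cite{leib05} — that nilsequences are closed under composition with integer-valued polynomials — implies that $\phi(n) := \psi(P(n))$ is itself a uniform limit of $k'$-step nilsequences for some $k' = k'(k, \deg P)$. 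The identity $\phi(n_i) = b(i)$ then shows that the positive-density set $F' := \{n_i\}_{i \in \N} \subset \N$ is $k'$-step-$I_0$.

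It therefore suffices to prove the more intrinsic claim: no subset $F' \subset \N$ of positive upper density can be $k$-step-$I_0$ for any $k$. I would attack this by applying the lemma of Moreira, Richter, and Robertson \cite{moreira-richter-robertson} to $F'$ and to an arbitrary uniform limit of $k$-step nilsequences $\phi$. Roughly, that lemma — combined with the fact that nilsequences have uniform Cesaro averages — delivers, along a subsequence $N_j \to \infty$ realizing the density $\delta$, an averaging identity expressing
\[
  \frac{1}{N_j} \sum_{n \in F' \cap [1, N_j]} \phi(n)
\]
in terms of invariant data of $\phi$ (for instance, an integral against a probability measure on the nilfactor canonically determined by $F'$). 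After dividing by $|F' \cap [1, N_j]|/N_j \to \delta$, this forces the restricted averages $\tfrac{1}{|F' \cap [1, N_j]|} \sum_{i : n_i \leq N_j} b(i)$ to converge as $j \to \infty$. A contradiction then follows by choosing $b \in \{0,1\}^{\N}$ concretely, say a binary sequence with alternating runs so long that its partial averages along every admissible sequence of scales $(N_j)$ fail to converge.

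The principal obstacle is extracting the correct form of the MRR averaging identity and verifying that its hypotheses cover \emph{every} uniform limit of $k$-step nilsequences — once such an identity is in hand, the contradiction is routine. A secondary but harmless matter is the jump $k \mapsto k'$ in the Leibman reduction caused by polynomial composition; since the proposition is uniform in the nilsequence step, the increase can simply be absorbed into the hypothesis.
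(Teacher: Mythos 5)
Your argument is sound in outline, and its second half coincides with the paper's: the paper likewise feeds the indicator function of the index set $\{n_i\}$ into the Moreira--Richter--Robertson lemma to extract a sub-F{\o}lner sequence of intervals along which $\frac{1}{|I_N|}\sum_{n\in I_N}1_{\{n_i\}}(n)\,\psi(P(n))$ converges for \emph{every} nilsequence $\psi$ simultaneously, divides by the (positive, convergent) relative density, and concludes that the restricted averages of any interpolating $\psi$ over $F$ converge --- which a suitably chosen $b\in\{0,1\}^{\N}$ contradicts. Where you genuinely differ is the first half: you compose with $P$ and invoke the fact that $\psi\circ P$ is again a (higher-step) nilsequence, reducing to the case $E=\N$. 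The paper never composes; it only uses the weaker fact that the averages $\frac{1}{|I_N|}\sum_{n\in I_N}\psi(P(n))$ converge (Leibman), packaged as the hypothesis that the pair $\{E,\phi\}$ is ``good for averaging nilsequences.'' Your reduction is legitimate (the composition fact is standard, and the paper itself uses the special case $\theta(n)\mapsto\theta(n^2)$ in its $2$-step-$I_0$ example), but it costs the step increase $k\mapsto k'$ and demands a stronger structural input than necessary; the paper's formulation also covers non-polynomial cases such as $\lfloor n^c\rfloor$ and $P(p_n)$ at no extra charge.

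Two points need tightening. First, $\bar d_E$ is a Banach-type density (the $\limsup$ is over $N-M\to\infty$), so the argument must run over general intervals $[M+1,N]$ realizing the density, not initial segments $[1,N_j]$; both Leibman's averaging and the MRR lemma accommodate arbitrary F{\o}lner sequences of intervals, so this is only a matter of phrasing. Second, watch the order of quantifiers at the end: the sub-F{\o}lner sequence must be chosen, as the MRR lemma permits, to work for \emph{all} nilsequences at once, \emph{before} $b$ is selected; you then construct $b$ whose averages fail to converge along that one fixed sequence of scales (possible since the relevant index sets grow). No single $b$ can fail along ``every admissible sequence of scales,'' since any bounded sequence has convergent averages along some subsequence --- fortunately only the fixed one matters. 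Finally, no ``averaging identity'' or representation in terms of invariant data is needed from the MRR lemma: bare existence of the limit along a common sub-F{\o}lner sequence is exactly what the lemma asserts and is all the proof uses, so the obstacle you flag as principal is not actually there.
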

    
    Intuitively, one expects that there is more freedom when increasing the step $k$ in $k$-step nilsequences. This intuition is partially confirmed by following proposition, which is proved in \cref{sec:2_step}: 
    \begin{proposition}
    \label{prop:apr-27-1}
        There exists a set that is $2$-step-$I_0$ but not $1$-step-$I_0$.
    \end{proposition}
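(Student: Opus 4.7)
The plan is to construct an explicit set $E$ of the form $E = A \cup B$ where $A$ and $B$ are two disjoint lacunary sets chosen to simultaneously satisfy (i) the differences $A - B$ form a set of Bohr recurrence---so that by the Hartman--Ryll-Nardzewski characterization $E$ fails to be $1$-step-$I_0$---and (ii) the orbits $\{a^n\Gamma : n \in A\}$, $\{a^n\Gamma : n \in B\}$ in an appropriate Heisenberg nilmanifold $G/\Gamma$ have disjoint closures for some $a \in G$, providing $2$-step separation of $A$ and $B$. The essential point, emphasized in the introduction, is that nilrotations are not isometries: orbit closures in a $2$-step nilmanifold can be disjoint even when the corresponding abelian differences are Bohr-recurrent, so there is room to have both (i) and (ii) at once.

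Concretely, I would take $A = \{r_n\}$ lacunary (say $r_n = M^n$ for suitable $M$) and $B$ obtained from $A$ by a perturbation $B = \{r_n + t_n\}$, with $t_n$ chosen so that $\{t_n\}$ is itself a Bohr-recurrence set (forcing (i)) and so that the Heisenberg orbit shifts under $r_n \mapsto r_n + t_n$ land in a distinguished region of $G/\Gamma$ (providing (ii)). Verifying that $E$ is $2$-step-$I_0$ for an arbitrary bounded $b \colon E \to \C$ would then proceed by: interpolating $b|_A, b|_B$ separately by almost periodic sequences $\psi_A, \psi_B$ (using Strzelecki's theorem applied to the lacunary sets $A$ and $B$); combining these via a $2$-step nilsequence mask $\phi(n) = f(a^n\Gamma)$, where $f \colon G/\Gamma \to [0,1]$ is continuous, near $1$ on the closure of the $A$-orbit and near $0$ on that of the $B$-orbit; and extracting an exact interpolant as a uniform limit, so that $\psi = \phi \cdot \psi_A + (1-\phi)\cdot \psi_B$ realizes the required extension.

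The main obstacle is the simultaneous realization of (i) and (ii): one must engineer $a$ and $t_n$ so that the Heisenberg orbit meaningfully separates $A$ from $B$ despite the abelian recurrence of $A - B$. This requires exploiting the center coordinate of $a^n$ (the quadratic phase $\binom{n}{2}\alpha\beta$) to distinguish $r_n$ from $r_n + t_n$, which should work because the increment $\binom{r_n+t_n}{2} - \binom{r_n}{2} = r_n t_n + \binom{t_n}{2}$ contains the term $r_n t_n$ growing with $r_n$, rather than staying bounded like the linear difference $t_n$ alone. Additionally, to obtain $2$-step-$I_0$ in the full sense one must separate every disjoint partition of $E$, not merely the natural $\{A, B\}$; I would address this by passing to a product of Heisenberg nilmanifolds, each factor separating a particular partition, and appealing to a compactness/diagonalization argument to produce a single interpolating uniform limit of $2$-step nilsequences.
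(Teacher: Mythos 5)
Your overall architecture is the right one and matches the paper's in outline: take $E=A\cup B$ with $A$ lacunary and $B=\{r_n+t_n\}$ a perturbation whose differences from $A$ form a set of Bohr recurrence (killing $1$-step-$I_0$ via \cref{prop:dec-31-2} and \cref{thm:dec-31-1}), and then exploit the quadratic phase $2r_nt_n+t_n^2$ to separate $A$ from $B$ at the $2$-step level. Your masking construction $\psi=\phi\psi_A+(1-\phi)\psi_B$ is also a legitimate way to conclude $2$-step-$I_0$ once the separation is in hand (it is the function-theoretic form of \cref{lem:dec-31-4}), and it makes your final paragraph unnecessary: you do not need to separate every partition of $E$ directly, and a ``product over all partitions plus diagonalization'' cannot work anyway, since there are uncountably many partitions and the characterization only ever asks for one (pair-dependent) nilsystem per pair. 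The paper avoids the Heisenberg group entirely by the cleaner observation that if $\{r_n^2\}_{n\in\N}$ is $1$-step-$I_0$ then $\{r_n\}_{n\in\N}$ is $2$-step-$I_0$ (compose an almost periodic interpolant with $n\mapsto n^2$, using that $(e^{2\pi i n^2\alpha})_{n\in\N}$ is a $2$-step nilsequence), which reduces the whole problem to abelian rotations on the squared set.

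The genuine gap is at the crux: you assert that the separation in step (ii) ``should work'' because the increment $r_nt_n+\binom{t_n}{2}$ grows with $r_n$, but growth alone proves nothing --- one must exhibit a specific nilrotation (equivalently, after squaring, a specific $\alpha\in\T$) for which the relevant orbit points stay uniformly bounded away from each other. Moreover, your analysis only tracks the diagonal differences $\binom{r_n+t_n}{2}-\binom{r_n}{2}$; separability of $A$ and $B$ requires $0\notin\overline{(A-B)\alpha}$, and $A-B$ contains all cross differences $(r_n+t_n)^2-r_m^2=2nr_n+n^2+r_n^2-r_m^2$ with $m\neq n$ (taking $t_n=n$), so one element of $\{2ns_n+n^2\}$ combined with two elements of $\{s_n^2\}$. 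Controlling all of these simultaneously is exactly where the work lies: the paper chooses $s_n$ growing fast enough that the sequences $c_n=2ns_n+n^2$ and $d_n=s_n^2$ interleave into a single lacunary sequence with large ratios, and then runs a nested-interval argument to produce an $\alpha$ with $c_n\alpha\in(1/2,1/2+\ell)$ and $d_n\alpha\in(0,\ell)$ for all $n$, forcing $(B-A)\alpha$ into a neighborhood of $1/2$. Without this (or an equivalent explicit construction), your proposal establishes only the negative half of the proposition.
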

    More generally, we conjecture that
    \begin{conjecture}
        For every $k \in \N$, there exists a set that is $(k+1)$-step-$I_0$ but not $k$-step-$I_0$.
    \end{conjecture}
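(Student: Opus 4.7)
The plan is to proceed by induction on $k$, with the base case given by \cref{prop:apr-27-1}. For the inductive step, the natural strategy is to imitate the $k = 1$ construction while replacing its underlying $2$-step nilpotent data with genuinely $(k+1)$-step data. A candidate construction is $E_{k+1} = \{\ell_n + s_n : n \in \N\}$ where $\{\ell_n\}$ is a sufficiently fast lacunary sequence (so as to inherit the interpolation flexibility provided by Strzelecki's theorem \cite{Strzelecki_1963}) and $s_n$ is an integer sequence arising from an orbit on a $(k+1)$-step nilmanifold that does not descend to any proper-step factor. For concreteness one could try $s_n = \lfloor \alpha n^{k+1}\rfloor$ for suitably irrational $\alpha$, since the standard Heisenberg-type constructions realize degree-$(k+1)$ polynomial sequences as genuine $(k+1)$-step nilsequences; more abstractly, one could let $s_n = F(g^n \Gamma)$ for an appropriate nilrotation and continuous test function.

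To show that $E_{k+1}$ is $(k+1)$-step-$I_0$, I would invoke the separability criterion described in \cref{sec:nessary-sufficient-nilsequences}: it suffices to show that any two disjoint subsets of $E_{k+1}$ can be separated by some $(k+1)$-step nilrotation. The lacunary component $\ell_n$ already supplies separability at the $1$-step (Bohr) level for arbitrary disjoint subsets, while the perturbation $s_n$ is by construction a $(k+1)$-step nilsequence; working on a product nilmanifold combining the Bohr factor and the $(k+1)$-step piece should preserve separability at step $k+1$, provided the lacunary base is chosen to grow fast enough that interpolation on it does not interact badly with the polynomial tail.

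The harder direction is showing that $E_{k+1}$ fails to be $k$-step-$I_0$. Here one must exhibit disjoint $A, B \subset E_{k+1}$ that are \emph{not} separable by any $k$-step nilrotation. The heuristic is that evaluating a $k$-step nilsequence along $\{\ell_n + s_n\}$ forces the degree-$(k+1)$ contribution $s_n$ to generate nontrivial constraints that cannot be absorbed by $k$-step structure, so some prescribed choice of values on $A \cup B$ must be realizable by no uniform limit of $k$-step nilsequences. Making this rigorous is the principal obstacle: unlike the Bohr case, $k$-step nilrotations are not isometries, so the Hartman-Ryll-Nardzewski line of argument does not transfer directly. One would plausibly need a quantitative equidistribution theorem for polynomial orbits on $k$-step nilmanifolds in the spirit of Leibman \cite{leib05} (or Green-Tao), applied in reverse to convert the degree-$k+1$ rigidity of $s_n$ into an obstruction for $k$-step interpolation, together with a delicate choice of $A$ and $B$ that exploits this mismatch. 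This is exactly the difficulty flagged by the author in \cref{sec:nilsequences}, and presumably the reason the statement is offered as a conjecture rather than a theorem.
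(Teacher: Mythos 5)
The statement you are addressing is stated in the paper as a \emph{conjecture}: the author provides no proof of it, and explicitly flags (in the discussion preceding \cref{sec:nilsequences} and in your own closing sentence) why the known techniques do not suffice. Your proposal is likewise not a proof --- you concede the essential direction, namely exhibiting disjoint $A, B \subset E_{k+1}$ that are not separable by any $k$-step nilrotation --- so there is a genuine gap, and it is exactly the gap that makes this a conjecture rather than a theorem. The only mechanism the paper has for ruling out $k$-step-$I_0$ for \emph{all} $k$ is the averaging obstruction of \cref{prop:positive-density-polynomial}, which applies to polynomial sets and their positively dense subsets, not to sparse lacunary-type sets; and the only mechanism for ruling out $1$-step-$I_0$ is the Bohr-recurrence criterion of \cref{prop:dec-31-2}, which does not transfer to $k \geq 2$ because nilrotations are not isometries. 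Nothing in your sketch supplies a replacement for either tool.

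There is also a concrete defect in the candidate construction. You propose the single set $E_{k+1} = \{\ell_n + s_n : n \in \N\}$ with $\{\ell_n\}$ ``sufficiently fast lacunary'' and $s_n$ of polynomial size. But then $E_{k+1}$ is itself lacunary, hence $I_0$ by Strzelecki's theorem, hence $1$-step-$I_0$ and a fortiori $k$-step-$I_0$ for every $k$ --- so it cannot witness the failure you need. The $k=1$ construction in \cref{sec:2_step} is essentially different: it is a \emph{union} of two interleaved lacunary sequences, $\{s_n\} \cup \{s_n + n\}$, chosen so that the difference set of the two pieces contains $\N$ (a set of Bohr recurrence), which is what kills $1$-step-$I_0$ via \cref{thm:dec-31-1} and \cref{prop:dec-31-2}. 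Any attempt at the inductive step would have to start from a union of that shape and then find a substitute for the Bohr-recurrence obstruction at step $k$; your heuristic about ``degree-$(k+1)$ rigidity'' does not identify such a substitute.
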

    On the other hand, motivated by \cref{thm:may-8-1}, we ask:
    \begin{question}    
        Is it true that every set that is denser than lacunaries is not $k$-step-$I_0$ for any $k \in \N$?
    \end{question}
    The answer is probably negative. 
    
    \subsection{The analogue to Frantzikinakis's question for arbitrary sequences does not hold}
    It is shown in \cite{Le17} that the answer to \cref{ques:mar-14-1} is affirmative for all $r_n = P(n)$ or $P(p_n)$ where $P \in \Z[n]$ non-constant, and for $(r_n)_{n \in \N}$ in a large class of Hardy field sequences, for example, $r_n = \lfloor n \log n \rfloor$ or $\lfloor n^2 \sqrt{2} + n \sqrt{3} \rfloor$. By Strzelecki's Theorem and Hartman-Ryll-Nardzewski characterization, it still holds for lacunary and some unions of lacunary sequences. Hence one may wonder if there exists a sequence for which the answer to \cref{ques:mar-14-1} is negative? We provide such example in  \cref{sec:negative_to_frantzikinakis_question} by showing that
    \begin{proposition}
    \label{prop:negative_to_frantzikiankis_question}
    There exists an increasing sequence of natural numbers $(r_n)_{n \in \N}$ and a  $1$-correlation $(a(n))_{n \in \N}$ such that for any almost periodic sequence $(\psi(n))_{n \in \N}$,
    \[
        \liminf_{N \to \infty} \sum_{n=1}^N |a(r_n) - \psi(r_n)| > 0.
    \]
\end{proposition}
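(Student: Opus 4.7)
The proposition, read literally, reduces to a very clean statement. Since $\sum_{n=1}^{N}|a(r_n)-\psi(r_n)|$ is nondecreasing in $N$, its $\liminf$ equals its limit, and the inequality $\liminf>0$ holds precisely when $a(r_n)\ne\psi(r_n)$ for at least one $n$. So the proposition amounts to producing an increasing sequence $(r_n)$ and a $1$-correlation $a$ such that $a|_{\{r_n\}}$ cannot be realized as the restriction of any almost periodic sequence on $\N$.

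The most direct plan is to take $r_n=n$, reducing the task to exhibiting a single $1$-correlation $a\colon\N\to\R$ that is not almost periodic. I would use $a(n)=\lambda^n$ for a fixed $\lambda\in(0,1)$, realized as the autocorrelation of a bounded AR$(1)$ process. On the two-sided Bernoulli shift $(X,T,\sigma)=(\{-1,+1\}^{\Z},\mathrm{shift},\mathrm{Bernoulli}(1/2))$, set
\[
    f(\omega)=\sqrt{1-\lambda^2}\,\sum_{k\ge 0}\lambda^k\omega_{-k},
\]
which is bounded by $\sqrt{(1+\lambda)/(1-\lambda)}$ and hence lies in $L^\infty(\sigma)$. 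Using $\int\omega_i\omega_j\,d\sigma=\mathbbm{1}_{\{i=j\}}$, a short geometric-series computation yields $\int f\cdot f\circ T^n\,d\sigma=\lambda^n$ for all $n\ge 0$, presenting $a$ as a $1$-correlation in the sense of the paper. It then remains to argue that $\lambda^n$ is not almost periodic. For this I would invoke the standard fact that any almost periodic sequence $\psi$ on $\N$ with $\psi(n)\to 0$ must vanish identically: its Bohr mean $M(|\psi|^2)$ equals the sum of the squared moduli of its Bohr--Fourier coefficients, so $\psi\to 0$ forces all coefficients, and hence $\psi$ itself, to be zero. Since $\lambda^n\to 0$ but $\lambda^n\ne 0$ for every $n$, no almost periodic $\psi$ can match $a$ everywhere.

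The only slightly technical point is the bounded realization of $\lambda^n$ as a correlation, which is the reason I prefer the $\{-1,+1\}$-valued Bernoulli shift over a Gaussian construction (where the coordinate function is unbounded). If the intended statement is in fact the Cesaro version $\liminf\tfrac{1}{N}\sum|a(r_n)-\psi(r_n)|>0$, which would more directly negate \cref{ques:mar-14-1}, then $r_n=n$ no longer suffices because Bergelson--Host--Kra guarantees Cesaro approximation of every $1$-correlation by some almost periodic sequence; one would instead need a sequence $(r_n)$ denser than all lacunaries (hence not $I_0$ by \cref{thm:may-8-1}) together with a continuous-spectrum $1$-correlation whose values along $(r_n)$ stay far in Cesaro $L^1$ from every almost periodic sequence. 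The hard step there would be the uniform exclusion of all $\psi$ at once, most likely via a separability/density argument over a countable dense subclass of almost periodic sequences.
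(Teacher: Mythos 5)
You have correctly spotted that the statement, as printed, omits a normalization: read literally, $\liminf_{N}\sum_{n=1}^N |a(r_n)-\psi(r_n)|>0$ only asks that $a$ restricted to $\{r_n\}$ fail to be the restriction of any almost periodic sequence, and your AR(1)/Bernoulli-shift realization of the non--almost-periodic $1$-correlation $a(n)=\lambda^n$ does settle that literal reading. But the intended statement is the Ces\`aro version $\liminf_{N}\frac{1}{N}\sum_{n=1}^N|a(r_n)-\psi(r_n)|>0$ --- this is what actually negates the analogue of \cref{ques:mar-14-1}, and it is what the paper's own proof establishes (its final display carries the factor $\frac{1}{2N}$). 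For that version your argument with $r_n=n$ fails, as you yourself acknowledge, and your sketch of a fix is left open and is also aimed in the wrong direction: you propose a sequence denser than all lacunaries together with a separability argument over a countable dense family of almost periodic sequences, whereas the paper uses the very sparse sequence $\{3^n\}_{n\in\N}\cup\{3^n+n\}_{n\in\N}$ (a union of two lacunary sets) and handles ``all $\psi$ at once'' by a soft uniformity device, namely that the $\epsilon$-almost periods of any almost periodic sequence form a syndetic set.

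Concretely, the paper's mechanism is: take the Riesz product $\sigma$ with $\hat{\sigma}\left(\sum_j \epsilon_j 3^j\right)=\prod_{\epsilon_j\neq 0}\frac{1}{2}$, so that $\hat{\sigma}(3^n)=1/2$ while $\hat{\sigma}(3^n+n)\le 1/4$; realize $\hat{\sigma}$ as a correlation of a Gaussian system and approximate in $L^{\infty}$ to get a genuine $1$-correlation $a$ with $|a(n)-\hat{\sigma}(n)|<1/16$ everywhere, hence $|a(3^n+n)-a(3^n)|\ge 1/8$ for all $n$. For any almost periodic $\psi$, since the gaps $(3^n+n)-3^n=n$ run over all of $\N$ and the $1/16$-almost periods of $\psi$ are syndetic, one gets $|\psi(3^n+n)-\psi(3^n)|<1/16$ for a syndetic (hence positive lower density) set of $n$, and on that set the triangle inequality forces $|a(3^n+n)-\psi(3^n+n)|+|a(3^n)-\psi(3^n)|>1/16$; this bounds the normalized liminf below by a positive constant. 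The two ingredients your proposal is missing are exactly these: a correlation (continuous spectrum, via a Riesz product) that is provably far apart at the designated pairs of nearby points, and the syndeticity of almost periods as the uniform-in-$\psi$ argument.
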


\subsection{Acknowledgment}
   We are grateful for Bryna Kra for her guidance during the course of this project. We thank Joel Moreira and Florian Richter for many helpful discussions and John Griesmer for pointing out the connection between our project and results in harmonic analysis. We also thank Kathryn Hare for answering our question regarding interpolation sets. 
\section{Preliminaries}
\label{sec:background}

\subsection{Notation}
    For $N \in \N$, let $[N]$ denote the set $\{1, 2, \ldots, N\}$ and $\T$  denote the torus $\R/\Z$. The notation $(b(n))_{n \in \N}$ is used for a sequence of complex numbers and $\{r_n\}_{n \in \N}$ for a subset of $\N$ with $r_1 < r_2 < \ldots$
\subsection{Nilsequences}
\label{sec:preliminary_nilsequences}
    For $k \in \N$, let $G$ be a $k$-step nilpotent Lie group and $\Gamma$ be a discrete, cocompact subgroup of $G$. Then $X = G/\Gamma$ is compact and $G$ acts on $X$ by left translation. For $g \in G$, the system $(X, g)$ is called a \emph{$k$-step nilsystem}. Furthermore, if $F \in C(X)$, $x \in X$, the sequence $(F(g^n \cdot x))_{n \in \N}$ is called a \emph{$k$-step nilsequence}. The family of $k$-step nilsequences forms a sub-algebra of $\ell^{\infty}$ and is closed under complex conjugation.  
    
    A sequence $(\psi(n))_{n \in \N}$ is called a \emph{uniform limit of $k$-step nilsequences} if for every $\epsilon > 0$, there exists a $k$-step nilsequence $(\psi_{\epsilon}(n))_{n \in \N}$ such that $|\psi(n) - \psi_{\epsilon}(n)| < \epsilon$ for all $n \in \N$.
    
    All $1$-step nilsequences are trigonometric polynomials (sequences having the form $(\sum_{j=1}^M c_j e^{2 \pi i n \alpha})_{n \in \N}$ for some $c_j \in \C$ and $\alpha_j \in \T$) or uniform limits of sequences of this form. On the other hand, for every $\alpha \in \T$, $(e^{2 \pi i n^2 \alpha})_{n \in \N}$ is a $2$-step nilsequence. It follows that if $(\theta(n))_{n \in \N}$ is a uniform limit of $1$-step nilsequences, $(\theta(n^2))_{n \in \N}$ is a uniform limit of $2$-step nilsequences. See \cite[Section 4.3.1]{Bergelson_Host_Kra05} or \cite[Section 11.3.2]{host_kra_18} for more details on nilsequences. 
    
    The definition of nilsequences we use here follows \cite{host_kra_18, Frantzikinakis_Host_18}. There are some slightly different definitions in the literature. For example, in \cite{Bergelson_Host_Kra05}, our $k$-step nilsequences are called \emph{basic $k$-step nilsequences}, while they define nilsequences to be uniform limits of basic nilsequences. In \cite{Green_Tao10, Green_Tao12, Green_Tao_Ziegler12}, for the sequence $(F(g^n \cdot x))_{n \in \N}$ to be called a nilsequence, the function $F$ is required to be Lipschitz instead just being continuous. 
    
\subsection{Almost periodic sequences}
\label{sec:almost_periodic}
    A \emph{(Bohr) almost periodic sequence} is a uniform limit of $1$-step nilsequences which has several characterizations. More specifically, for a bounded sequence $(\psi(n))_{n \in \N}$, the followings are equivalent:
    \begin{enumerate}
        \item $(\psi(n))_{n \in \N}$ is an almost periodic sequence.
        
        \item There exists a compact abelian group $G$, an element $g \in G$ and a continous function $F$ on $G$ such that $\psi(n) = F(g^n)$ for all $n \in \N$. 
        
        \item For every $\epsilon > 0$, the set $\{T \in \N: |\psi(n + T) - \psi(n)| < \epsilon \,\, \forall n \in \N\}$ is syndetic (i.e. has bounded gaps).
        
        \item $(\psi(n))_{n \in \N}$ is a uniform limit of trigonometric polynomials.
        
        \item $(\psi(n))_{n \in \N}$ is a uniform limit of sequence of the form $(F(n \alpha))_{n \in \N}$ where $\alpha \in \T^d$, some finite dimenional torus, and $F$ is a continuous function on $\T^d$.
        
        \item The orbit of $(\psi(n))_{n \in \N}$ under the left shift $\sigma((\psi(n))_{n \in \N}) = (\psi(n+1))_{n \in \N}$ is pre-compact under the $\ell^{\infty}$-norm. 
    \end{enumerate}
    The equivalence of above definitions can be found in \cite{petersen_1983}. 

\subsection{Sets of Bohr recurrence}
\label{sec:sets_of_bohr_recurrence}
    A set $R \subset \N$ is called a \emph{set of Bohr recurrence} if for every element $\alpha$ in a finite dimensional torus $\T^d$, the closure of $\{r \alpha: r \in R\}$ in $\T^d$ contains $0$. It is easy to show that if we remove finitely many elements from a set of Bohr recurrence, it is still a set of Bohr recurrence.
    
    By the pigeonhole principle, the set $k \N = \{kn: n \in \N\}$ is a set of Bohr recurrence. Similarly a set containing arbitrarily long arithmetic progressions of the form $\{b, 2b, \ldots, kb\}$ is a set of Bohr recurrence.
    
    Related notions are sets of topological recurrence and sets of measurable recurrence (see \cite{Furstenberg81a, Frantzikinakis_McCutcheon_2011} for definition). By definition, sets of measurable recurrence are of topological recurrence and sets of topological recurrence are of Bohr recurrence. Examples of sets of measurable recurrence are $\{P(n): n \in \N\}$ where $P \in \Z[n]$ non-constant and having zero constant term, or $\mathbb{P} - 1 = \{p - 1: p \mbox{ prime}\}$ (see \cite{Furstenberg81a, Sarkozy78, Sarkozy78b}). 
    
\subsection{Sequence \texorpdfstring{$(2^n)_{n \in \N}$}{2^n}}
\label{subsec:2^n}

\subsubsection{Nilfactors are not characteristic for multiple ergodic averages along \texorpdfstring{$(2^n)_{n \in \N}$}{2^n}}

We refer readers to \cite{Host_Kra05, Ziegler07} for definitions of characteristic factors and nilfactors. Here we show that nilfactors are not characteristic for multiple ergodic averages along $(2^n)_{n \in \N}$. 

An increasing sequence of natural numbers $(r_n)_{n \in \N}$ is called a \emph{sequence of rigidity} for a system $(X, \mathcal{B}, \mu, T)$ if for all $f \in L^2(\mu)$, $\lVert f \circ T^{r_n} - f \rVert_{L^2(\mu)} \to 0$ as $n \to \infty$. In particular, for every $A \in \mathcal{B}$, $\mu(A \cap T^{-r_n} A) \to \mu(A)$. In \cite{bergelson_deljunco_lemanczyk_rosenblatt_2014}, it is shown that $(2^n)_{n \in \N}$ is a sequence of rigidity for some non-trivial weakly mixing system $(X, \mathcal{B}, \mu, T)$. Let $A \in \mathcal{B}$ with $0 < \mu(A) < 1$, then $\mu(A \cap T^{-2^n} A) \to \mu(A)$. On the other hand, all nilfactors of a weakly mixing system are trivial \cite{Furstenberg77}. Hence the projection of $\mu(A \cap T^{-2^n} A)$ to the nilfactors is the contant $\mu(A)^2$. Because $0 < \mu(A) < 1$, $\mu(A) \neq \mu(A)^2$, so our claim follows.

\subsubsection{Some irrational rotation along $(2^n)_{n \in \N}$ is not equidistributed on $\T$}
Take $\alpha = \sum_{n \in \N} 1/2^{n^2}$. Then it is obvious that $\alpha$ is irrational and the sequence $(2^n \alpha \mod 1)_{n \in \N} $ never visits the interval $[3/4,1)$, hence not even dense in $\T$.

\subsection{Interpolation sets}
Let $G$ be a compact abelian group, $\Gamma$ its discrete dual group, i.e. the group of continuous characters on $G$. A subset $E$ of $\Gamma$ is called an \emph{$I_0$ set} (or \emph{interpolation set}) if every bounded function on $E$ is the restriction of the Fourier-Stieltjes transform of a discrete measure on $G$. An important example is when $G = \T$ and $\Gamma = \Z$, and in this paper, we only restrict to this example. For interested readers, see \cite{Graham_Hare_2013} for more information on general interpolation sets.  

\begin{comment}
\subsection{Bohr compactification of $\Z$} 
Let $\Gamma$ be the group of characters of $\Z$, which we know, is isomorphic to $\T$. Let $\T_d$ be the torus $\T$ endowed with the discrete topology and $b \Z$ be the group of characters of $\T_d$. Then $b\Z$ is called the \emph{Bohr compactification of $\Z$}.
\end{comment}

\section{Connection with sets of Bohr recurrence}
\label{sec:connection_with_bohr}

\subsection{A necessary and sufficient condition}
\label{sec:necessary_sufficient}

\begin{definition*}
    For $A \subset \N$ and $\alpha \in \T^d$, define $\overline{A \alpha}$ to be the closure of $A \alpha := \{a \alpha: a \in A\}$ in $\T^d$.

    Two sets $A, B \subset \N$ are called \emph{separable by some rotation} (or just \emph{separable} for short) if there exists a finite dimensional torus $\T^d$ and an element $\alpha \in \T^d$ such that $\overline{A \alpha} \cap \overline{B \alpha} = \emptyset$.
    
    Then we also say $A$ is separable from $B$, and vice versa. 
\end{definition*}

We prove Hartman-Ryll-Nardzewski characterization with the terminology defined above (instead of using Bohr compactification) as this proof will generalize to nilsequences later. 
\begin{theorem}[Hartman-Ryll-Nardzewski \cite{Hartman_Ryll-Nardzewski_1964}]
    \label{thm:dec-31-1}
    A set $E \subset \N$ is $I_0$ if and only if every two disjoint subsets of $E$ are separable by some rotation.
\end{theorem}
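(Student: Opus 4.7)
The plan is to handle the two directions separately, working with the concrete characterizations of almost periodic sequences in \cref{sec:almost_periodic} rather than passing through the Bohr compactification. For the forward direction, suppose $E$ is $I_0$ and fix disjoint $A, B \subset E$. Define $b \colon E \to \C$ by $b|_A \equiv 1$, $b|_B \equiv 0$ (with arbitrary bounded values on $E \setminus (A \cup B)$) and extend it to an almost periodic $\psi$ on $\N$. By the characterization of almost periodic sequences as uniform limits of sequences $(F(n\alpha))_{n \in \N}$ with $F \in C(\T^d)$ and $\alpha \in \T^d$, one can find $F$ and $\alpha$ with $\sup_n |\psi(n) - F(n\alpha)| < 1/4$. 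Then $F(A\alpha)$ lies in the closed disk of radius $1/4$ about $1$, while $F(B\alpha)$ lies in the closed disk of radius $1/4$ about $0$; continuity of $F$ and compactness of $\overline{A\alpha}, \overline{B\alpha} \subset \T^d$ preserve these inclusions after taking closures, and since the two disks in $\C$ are disjoint, so are $\overline{A\alpha}$ and $\overline{B\alpha}$.

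For the converse, my key lemma handles simple functions: if $E = A_1 \sqcup \cdots \sqcup A_m$ is a finite partition and $b = \sum_{i=1}^m c_i \mathbf{1}_{A_i}$ with $M := \max_i |c_i|$, then $b$ extends to an almost periodic $\psi$ on $\N$ with $\|\psi\|_{\ell^\infty(\N)} \leq M$. The construction: for each pair $(i,j)$ pick a separator $\alpha_{ij} \in \T^{d_{ij}}$ for $A_i$ and $A_j$, then concatenate these into a single $\alpha \in \T^d$ (with $d = \sum d_{ij}$) so that the compact sets $\overline{A_i \alpha} \subset \T^d$ are pairwise disjoint. Choose pairwise disjoint open neighborhoods $U_i \supset \overline{A_i \alpha}$ and, by Urysohn's lemma, continuous $F_i \colon \T^d \to [0,1]$ supported in $U_i$ with $F_i \equiv 1$ on $\overline{A_i \alpha}$; disjointness of the $U_i$ gives $\sum_i F_i \leq 1$ pointwise on $\T^d$. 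The sequence $\psi(n) := \sum_i c_i F_i(n\alpha)$ is a $1$-step nilsequence, restricts to $b$ on $E$, and satisfies $|\psi(n)| \leq M \sum_i F_i(n\alpha) \leq M$.

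Given the lemma, a general bounded $b \colon E \to \C$ is handled by telescoping. Approximate $b$ on $E$ by simple functions $s_k$ obtained from partitioning the range disk into cells of diameter at most $2^{-k}$, so $\|b - s_k\|_{\ell^\infty(E)} \leq 2^{-k}$ and hence $\|s_k - s_{k-1}\|_{\ell^\infty(E)} \leq 3 \cdot 2^{-k}$ (with $s_{-1} \equiv 0$). Each difference $s_k - s_{k-1}$ is simple on $E$, so the lemma yields almost periodic extensions $\eta_k$ with $\|\eta_k\|_{\ell^\infty(\N)} \leq 3 \cdot 2^{-k}$ for $k \geq 1$ and $\|\eta_0\|_{\ell^\infty(\N)} \leq \|b\|_{\ell^\infty(E)} + 1$. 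Summability of these norms makes $\psi := \sum_{k \geq 0} \eta_k$ converge uniformly on $\N$; the uniform limit of almost periodic sequences is almost periodic, and $\psi|_E = \lim_k s_k = b$.

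The main obstacle is the sup-norm control $\|\psi\|_{\ell^\infty(\N)} \leq M$ in the simple-function lemma, since without it the telescoping step collapses: a naive extension of $\sum_i c_i \mathbf{1}_{A_i}$ built from independent Urysohn functions that are merely $0/1$ on the relevant closures has $\ell^\infty$-norm controlled only by $\sum_i |c_i|$, which blows up as the partition refines. Forcing the neighborhoods $U_i$ to be pairwise disjoint (rather than just the closures $\overline{A_i \alpha}$) is precisely what enforces $\sum_i F_i(n\alpha) \leq 1$ everywhere and keeps every extension inside the convex hull of $\{0, c_1, \ldots, c_m\}$.
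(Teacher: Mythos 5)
Your proof is correct, and both directions have the same overall shape as the paper's, but the converse runs on a different decomposition and a stronger key lemma. The paper also writes $b$ as a uniformly summable series of simple pieces, extends each piece to an almost periodic sequence with controlled sup norm, and passes to the uniform limit; however, it first reduces to real-valued $b$ with $0 \leq b \leq 1$ and uses the binary-digit decomposition $b = \sum_k b_k$ with $b_k \in \{0, 2^{-k}\}$. Each piece is then two-valued, so the separability hypothesis applies verbatim to its two level sets and a single Urysohn function with $0 \leq F \leq 2^{-k}$ gives the required bound with no further work. Your telescoping $b = \sum_k (s_k - s_{k-1})$ produces $m$-valued simple pieces instead, which forces two additions: concatenating the pairwise separators into a single $\alpha$ so that all $m$ closures $\overline{A_i \alpha}$ are pairwise disjoint (this is exactly the product trick of \cref{lem:dec-31-3}, so it is available), and taking pairwise disjoint open neighborhoods $U_i$ so that $\sum_i F_i \leq 1$ pointwise, which yields the bound $\max_i |c_i|$ rather than $\sum_i |c_i|$ --- you correctly identify this as the crux, and without it the telescoping would indeed collapse. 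What your route buys is that it handles complex-valued $b$ directly, without the reduction to real $[0,1]$-valued sequences, and it isolates a reusable simple-function interpolation lemma; the paper's binary decomposition is leaner because it only ever needs to separate two sets at a time.
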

\begin{proof}
    Let $E$ be an $I_0$ set and $A, B \subset E$ disjoint. There exists an almost periodic sequence $(\psi(n))_{n \in \N}$ such that $\psi(a) = 0$ for all $a \in A$ and $\psi(b) = 1$ for all $b \in B$. Fix $0 < \epsilon < 1/4$. Since $(\psi(n))_{n \in \N}$ is almost periodic, there exists a finite dimensional torus $\T^d$, a element $\alpha \in \T^d$ and a continuous function $F \in C(\T^d)$ such that $|F(n \alpha) - \psi(n)| < \epsilon$ for all $n \in \N$. Then $F(A \alpha) \subset B(0, \epsilon)$ and $F(B \alpha) \subset B(1, \epsilon)$ where $B(c, r) := \{x \in \C: |x - c| < r\}$. Since $F$ is continuous, $F(\overline{A \alpha}) \subset \overline{B(0, \epsilon)}$ and $F(\overline{B \alpha}) \subset \overline{B(1, \epsilon)}$. Because $\overline{B(0, \epsilon)} \cap \overline{B(1, \epsilon)} = \emptyset$, we have $\overline{A \alpha} \cap \overline{B \alpha} = \emptyset$. Therefore $A$ and $B$ are separable by some rotation.
    
    Conversely, assume every two disjoint subsets of $E$ are separable by some rotation. Let $r_1 < r_2 < \ldots$ be the enumeration of elements of $E$ in the increasing order and $(b(n))_{n \in \N}$ be an arbitrary bounded sequence. Without loss of generality, assume $b(n)$ takes real values and $0 \leq b(n) \leq 1$ for all $n \in \N$. It suffices to find an almost periodic sequence $(\psi(n))_{n \in \N}$ such that $\psi(r_n) = b(n)$ for all $n \in \N$.
    
    Decompose $(b(n))_{n \in \N}$ into a sum of sequences that take on only two values:
    \begin{equation*}
        b(n) = \sum_{k = 1}^{\infty} b_k(n),
    \end{equation*}
    where $b_k(n) \in \{0, 1/2^k\}$ for all $k, n \in \N$. For every $k \in \N$, let $A = \{r_n: b_k(n) = 0\}$ and $B =\{r_n: b_k(n) = 1/2^k\}$. Then $A, B$ are two disjoint subsets of $E$. According to our assumption, there exists a torus $\T^d$ and $\alpha \in \T^d$ such that $\overline{A \alpha} \cap \overline{B \alpha} = \emptyset$. By Urysohn's Lemma, there exists a continuous function $F: \T^d \to \R$ such that $F(\overline{A \alpha}) = \{0\}$ and $F(\overline{B \alpha}) = \{1/2^k\}$ while $0 \leq F \leq 1/2^k$ everywhere else. Define an almost periodic sequence $\psi_k(n) = F(n \alpha)$. By construction, $0 \leq \psi_k(n) \leq 1/2^k$ and $\psi_k(r_n) = b_k(n)$ for all $k, n \in \N$. 
    
    Then the sequence $\psi(n) := \sum_{k = 1}^{\infty} \psi_k(n)$ is a uniform limit of almost periodic sequences. Therefore it is again an almost periodic sequence and by construction, $\psi(r_n) = b(n)$ for all $n \in \N$. The proof finishes. 
    \end{proof}

\begin{lemma}
\label{prop:dec-31-2}
    The sets $A, B \subset \N$ are separable by some rotation if and only if $A - B = \{a - b : a \in A, b \in B\}$ is not a set of Bohr recurrence.    
\end{lemma}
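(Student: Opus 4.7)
The plan is to prove both directions using the same torus $\T^d$ and the same element $\alpha$, so the equivalence reduces to a compactness argument on $\T^d$. I will interpret ``$A-B$ is a set of Bohr recurrence'' for a subset of $\Z$ in the natural way, i.e.\ $0 \in \overline{(A-B)\alpha}$ for every finite-dimensional torus $\T^d$ and every $\alpha \in \T^d$; this extension is harmless because the map $x \mapsto -x$ is a continuous automorphism of $\T^d$ fixing $0$.

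For the forward direction, suppose $A$ and $B$ are separable by some rotation, witnessed by $\alpha \in \T^d$ with $\overline{A\alpha} \cap \overline{B\alpha} = \emptyset$. I would show that this same $\alpha$ witnesses that $A-B$ is not of Bohr recurrence. If on the contrary $0 \in \overline{(A-B)\alpha}$, pick sequences $a_n \in A$, $b_n \in B$ with $(a_n - b_n)\alpha \to 0$ in $\T^d$. By compactness of $\T^d$, pass to a subsequence so that $a_n\alpha \to x$ and $b_n \alpha \to y$ for some $x, y \in \T^d$; then the difference $(a_n - b_n)\alpha$ converges to $x - y$, forcing $x = y$. But this $x$ lies in both $\overline{A\alpha}$ and $\overline{B\alpha}$, contradicting the assumption of separability.

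For the reverse direction, suppose $A-B$ is not a set of Bohr recurrence, so there exists $\alpha \in \T^d$ with $0 \notin \overline{(A-B)\alpha}$. I claim the same $\alpha$ separates $A$ and $B$. Indeed, if some point $x$ were in $\overline{A\alpha} \cap \overline{B\alpha}$, then there would be sequences $a_n\alpha \to x$ and $b_n\alpha \to x$ with $a_n \in A$, $b_n \in B$, giving $(a_n - b_n)\alpha \to 0$ and hence $0 \in \overline{(A-B)\alpha}$, a contradiction. Hence $\overline{A\alpha} \cap \overline{B\alpha} = \emptyset$.

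There is no real obstacle here; the only subtlety is the point mentioned above, that $A-B$ may contain negative integers (and even $0$, in which case $A-B$ is trivially a set of Bohr recurrence and both sides of the equivalence fail, consistent with the statement). Once the definition is extended to subsets of $\Z$ via the same closure condition, the argument is a short compactness exercise in the torus. The whole proof fits in a few lines and reuses only the definitions from \cref{sec:sets_of_bohr_recurrence} and \cref{sec:necessary_sufficient}.
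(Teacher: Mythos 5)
Your proof is correct and takes essentially the same route as the paper, which simply asserts both implications as immediate consequences of the definitions; your compactness argument (passing to subsequences so that $a_n\alpha \to x$ and $b_n\alpha \to y$ and using continuity of subtraction in $\T^d$) is the natural way to fill in the forward direction, and the reverse direction matches exactly. Your side remark about extending the notion of Bohr recurrence to subsets of $\Z$ (since $A-B$ may contain nonpositive integers) addresses a detail the paper glosses over.
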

\begin{proof}
    Assume $A$ and $B$ are separable by some rotation. Then there exists $\alpha \in \T^d$ such that $\overline{A \alpha} \cap \overline{B \alpha} = \emptyset$. This implies $0 \not \in \overline{(A - B) \alpha}$. Hence $A - B$ is not a set of Bohr recurrence.
    
    The other direction is similarly obvious.
\end{proof}

An \emph{$AP$-rich set} is a subset of $\N$ that contains arbitrarily long arithmetic progressions. 

\begin{corollary}
\label{cor:ap_rich}
    AP-rich sets are not $I_0$.
\end{corollary}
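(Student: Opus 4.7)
The plan is to combine the Hartman-Ryll-Nardzewski characterization (\cref{thm:dec-31-1}) with \cref{prop:dec-31-2}: if $E$ is an AP-rich set, it suffices to exhibit two disjoint subsets $A, B \subset E$ such that $A-B$ is a set of Bohr recurrence. By the observation recalled in \cref{sec:sets_of_bohr_recurrence}, any set containing arbitrarily long arithmetic progressions of the specific form $\{b, 2b, \ldots, kb\}$ is a set of Bohr recurrence, so the concrete target is to arrange $A-B \supseteq \{d_k, 2d_k, \ldots, k d_k\}$ for a sequence of integers $d_k$ and arbitrarily large $k$.

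First I would construct, inductively on $k \in \N$, pairwise disjoint arithmetic progressions
\[
    P_k = \{a_k, a_k + d_k, a_k + 2 d_k, \ldots, a_k + k d_k\} \subset E
\]
of length $k+1$. The inductive step amounts to: given the finite set $F = P_1 \cup \cdots \cup P_{k-1}$ and the AP-richness of $E$, find an AP in $E$ of length $k+1$ disjoint from $F$. For this, take any AP in $E$ of length $L$ much larger than $(k+2)|F|$; among its $L-k$ consecutive sub-APs of length $k+1$, each element of $F$ can lie in at most $k+1$ of them, so once $L - k > (k+1)|F|$ at least one such sub-AP avoids $F$.

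Having fixed the $P_k$, I would set
\[
    B = \{a_k : k \in \N\}, \qquad A = \{a_k + i d_k : k \in \N, \ 1 \leq i \leq k\},
\]
so that $A$ and $B$ are disjoint subsets of $E$ by the pairwise disjointness of the $P_k$. Then for each $k$ the difference set contains $\{(a_k + i d_k) - a_k : 1 \leq i \leq k\} = \{d_k, 2 d_k, \ldots, k d_k\}$, so $A - B$ contains arithmetic progressions of the form $\{b, 2b, \ldots, kb\}$ of arbitrary length, and is therefore a set of Bohr recurrence. By \cref{prop:dec-31-2}, $A$ and $B$ are not separable by any rotation, and \cref{thm:dec-31-1} then forces $E$ to fail to be $I_0$.

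The only point requiring genuine care is the inductive construction of pairwise disjoint APs of prescribed lengths inside $E$; the counting argument sketched above is the main (and essentially only) combinatorial input, and everything else is a direct translation through the two reductions already established in the paper.
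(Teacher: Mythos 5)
Your proof is correct and follows essentially the same route as the paper: split each long progression in $E$ into its starting point and the remaining terms, observe that the difference set then contains $\{d_k, 2d_k, \ldots, kd_k\}$ for arbitrarily large $k$ and hence is a set of Bohr recurrence, and conclude via \cref{prop:dec-31-2} and \cref{thm:dec-31-1}. The only difference is that you spell out the counting argument guaranteeing the progressions can be chosen pairwise disjoint, a step the paper dispatches with ``we can choose $\{(x_k,y_k)\}$ so that $A$ and $B$ are disjoint.''
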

\begin{proof}
    Let $E$ be an AP-rich set. Then for every $k \in \N$, there exists $x_k, y_k \in \N$ such that $x_k, x_k + y_k, \ldots, x_k + (k-1)y_k \in E$. Let $A = \{x_k: k \in \N\}$ and $B = \bigcup_{k=1}^{\infty}\{x_k + y_k, x_k + 2 y_k, \ldots, x_k + (k-1) y_k\}$. We can choose $\{(x_k, y_k): k \in \N\}$ so that $A$ and $B$ are disjoint. Then $B - A \supset \bigcup_{k=1}^{\infty} \{y_k, 2 y_k, \ldots, (k-1) y_k\}$, which is a set of Bohr recurrence (see \cref{sec:sets_of_bohr_recurrence}). Therefore $E$ is not $I_0$ by \cref{prop:dec-31-2} and \cref{thm:dec-31-1}.
\end{proof}

\subsection{Two lemmas on unions of \texorpdfstring{$I_0$}{I0} sets}
\label{sec:some_properties}
\begin{lemma}
\label{lem:dec-31-3}
    Let $A, B, C \subset \N$. Suppose $A$ is separable by some rotation from $B$ and from $C$. Then $A$ is separable by some rotation from $B \cup C$. 
\end{lemma}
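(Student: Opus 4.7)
The plan is to combine the two separating rotations (living on possibly different tori) into a single rotation on the product torus, and then use a pigeonhole argument on any hypothetical common limit point.

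By hypothesis there exist a torus $\T^d$, an element $\alpha \in \T^d$ with $\overline{A\alpha}\cap\overline{B\alpha}=\emptyset$, and a torus $\T^e$, an element $\beta\in\T^e$ with $\overline{A\beta}\cap\overline{C\beta}=\emptyset$. I would pass to the product torus $\T^{d+e}$ and set $\gamma=(\alpha,\beta)$, so that $n\gamma=(n\alpha,n\beta)$. The claim to prove is $\overline{A\gamma}\cap\overline{(B\cup C)\gamma}=\emptyset$.

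Suppose for contradiction that some point $(x,y)$ lies in this intersection. Then there is a sequence $(a_k)\subset A$ with $(a_k\alpha,a_k\beta)\to(x,y)$, and a sequence $(z_k)\subset B\cup C$ with $(z_k\alpha,z_k\beta)\to(x,y)$. By the pigeonhole principle, after passing to a subsequence I may assume either $z_k\in B$ for all $k$ or $z_k\in C$ for all $k$. In the first case, projecting onto $\T^d$ gives $x\in\overline{A\alpha}\cap\overline{B\alpha}$, contradicting the choice of $\alpha$; in the second, projecting onto $\T^e$ gives $y\in\overline{A\beta}\cap\overline{C\beta}$, contradicting the choice of $\beta$. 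Either way we reach a contradiction, so $A$ and $B\cup C$ are separable by the rotation $\gamma$ on $\T^{d+e}$.

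There is essentially no serious obstacle here; the only conceptual point is recognizing that separability is stable under the product construction, which works because the projections $\T^{d+e}\to\T^d$ and $\T^{d+e}\to\T^e$ are continuous and hence send closures into closures of images. This makes the argument go through verbatim and also suggests the natural inductive extension to any finite union, which is presumably what the lemma will be used for in the next section.
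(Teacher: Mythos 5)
Your proposal is correct and takes essentially the same approach as the paper: pass to the product torus $\T^{d+e}$ with the diagonal element $(\alpha,\beta)$ and observe that the closures remain disjoint. The paper simply asserts this last step without the sequence/pigeonhole verification you supply, so your argument is just a more detailed version of the same proof.
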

\begin{proof}
    There exists torus $\T^b, \T^c$ and elements $\beta \in \T^b, \gamma \in \T^c$ such that $\overline{A \beta} \cap \overline{B \beta} = \emptyset$ and $\overline{A \gamma} \cap \overline{C \gamma} = \emptyset$. Then $\overline{A (\beta, \gamma)} \cap \overline{B \cup C (\beta, \gamma)} = \emptyset$ where the closures are taken in $\T^{b+c}$.
\end{proof}

\begin{lemma}[{see also Graham-Hare \cite[Corollary 3.4.3]{Graham_Hare_2013}}]
\label{lem:dec-31-4}
    If $E$ and $F$ are $I_0$ and separable by some rotation, then $E \cup F$ is $I_0$.
\end{lemma}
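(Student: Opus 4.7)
The plan is to invoke the Hartman--Ryll-Nardzewski characterization (\cref{thm:dec-31-1}) together with \cref{lem:dec-31-3}, reducing the statement to a purely combinatorial claim about separability. By \cref{thm:dec-31-1}, it suffices to show that any two disjoint subsets $A, B \subset E \cup F$ are separable by some rotation.

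First I would split $A$ and $B$ according to whether their elements lie in $E$ or in $F$: write $A_E := A \cap E$, $A_F := A \setminus E \subset F$, and similarly $B_E := B \cap E$, $B_F := B \setminus E \subset F$. Since $E$ is $I_0$ and $A_E, B_E$ are disjoint subsets of $E$, \cref{thm:dec-31-1} yields that $A_E$ and $B_E$ are separable. Similarly $A_F$ and $B_F$ are separable because $F$ is $I_0$. For the cross terms $A_E$ vs.\ $B_F$ and $A_F$ vs.\ $B_E$: by hypothesis there is $\alpha \in \T^d$ with $\overline{E\alpha} \cap \overline{F\alpha} = \emptyset$, and since $A_E \alpha \subset E\alpha$ and $B_F \alpha \subset F\alpha$, the closures of $A_E\alpha$ and $B_F\alpha$ are automatically disjoint; likewise for $A_F$ and $B_E$.

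Now I would assemble these four pairwise separations into a single one using \cref{lem:dec-31-3} twice. Applying the lemma to $A_E$ (separable from both $B_E$ and $B_F$) gives that $A_E$ is separable from $B_E \cup B_F = B$; applying it to $A_F$ gives that $A_F$ is separable from $B$. Finally, since separability of two sets is symmetric in its arguments, another application of \cref{lem:dec-31-3} (with the roles of the two sides swapped, i.e.\ applied to $B$, which is separable from both $A_E$ and $A_F$) yields that $B$ is separable from $A_E \cup A_F = A$. By \cref{thm:dec-31-1}, $E \cup F$ is $I_0$.

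There is no real obstacle here; the only thing to be careful about is that \cref{lem:dec-31-3} is stated asymmetrically (the ``common'' set is the first argument), so one needs to invoke it once in each orientation, which is legitimate because the definition of separability is symmetric in the two sets. All the pairwise inputs are immediate from the hypotheses plus the trivial monotonicity $A' \subset A \Rightarrow \overline{A'\alpha} \subset \overline{A\alpha}$.
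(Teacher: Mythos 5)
Your proof is correct and follows essentially the same route as the paper: decompose $A$ and $B$ into their intersections with $E$ and $F$, get the four pairwise separations from the $I_0$ hypotheses (for the ``same-side'' pairs) and from the separability of $E$ and $F$ (for the cross pairs), then assemble with \cref{lem:dec-31-3} and conclude via \cref{thm:dec-31-1}. Your explicit remark about applying \cref{lem:dec-31-3} in both orientations using the symmetry of separability is a point the paper glosses over, but the argument is the same.
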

\begin{proof}
    Let $A, B$ be two disjoint subsets of $E \cup F$. Then $A = (A \cap E) \cup (A \cap F)$ and $B = (B \cap E) \cup (B \cap F)$. The sets $A \cap E$ and $B \cap E$ are separable because they are disjoint subsets of an $I_0$ set $E$. On the other hand, $A \cap F$ and $B \cap E$ are separable because $E$ and $F$ are separable. By \cref{lem:dec-31-3}, the sets $A$ and $B \cap E$ are separable. Similarly, $A$ and $B \cap F$ are separable. Again by \cref{lem:dec-31-3}, $A$ and $B$ are separable. Since $A$ and $B$ are two arbitrary disjoint subsets of $E \cup F$, we get $E \cup F$ is $I_0$ by \cref{thm:dec-31-1}.
\end{proof}

As an application of \cref{lem:dec-31-4}, we give a proof of \cref{example:may-9-1}.
\begin{customexample}{\ref{example:may-9-1}}
    The set $E = \{2^n\}_{n \in \N} \cup \{2^n + 2n - 1\}_{n \in \N}$ is $I_0$ but $F = \{2^n\}_{n \in \N} \cup \{2^n + 2n\}_{n \in \N}$ is not $I_0$.
\end{customexample}
\begin{proof}
    Let $A = \{2^n\}_{n \in \N}$ and $B = \{2^n + 2n - 1\}_{n \in \N}$. By Strzelecki's Theorem, both $A$ and $B$ are $I_0$ since they are lacunary. On the other hand, they are separable by the rotation $1/2 \in \T$. Hence by \cref{lem:dec-31-4}, the set $E = A \cup B$ is $I_0$.
    
    However, for the set $F$, one has $\{2^n + 2n\}_{n \in \N} - \{2^n\}_{n \in \N}$ contains $2 \N$, which is a set of Bohr recurrence. Hence $F$ is not $I_0$ by \cref{prop:dec-31-2} and \cref{thm:dec-31-1}. 
\end{proof}

\subsection{Union of an \texorpdfstring{$I_0$}{I0} set and a finite set}
\label{sec:union_with_finite_set}

\begin{lemma}
    \label{lem:nov-29-1}
    Let $R$ be a set of Bohr recurrence. Then for any $\epsilon > 0$ and torus $\T^d$, there exists $N = N(R, \T^d, \epsilon)$ such that for every $\alpha \in \T^d$, there exists $r \in R \cap [N]$ satisfying $\lVert r \alpha \rVert_{\T^d} < \epsilon$ where $\lVert \cdot \rVert_{\T^d}$ denotes the distance to $0 \in \T^d$ in the flat torus metric.
\end{lemma}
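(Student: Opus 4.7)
The plan is to argue by contradiction using the compactness of $\T^d$. Suppose the conclusion fails for some $\epsilon > 0$ and some torus $\T^d$. Then for every $N \in \N$ we can find an $\alpha_N \in \T^d$ such that $\lVert r \alpha_N \rVert_{\T^d} \geq \epsilon$ for every $r \in R \cap [N]$. Our goal is to extract a limit point $\alpha$ that witnesses the failure of Bohr recurrence at scale $\epsilon$, contradicting the hypothesis on $R$.

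By sequential compactness of $\T^d$, the sequence $(\alpha_N)_{N \in \N}$ has a convergent subsequence $\alpha_{N_k} \to \alpha$ with $N_k \to \infty$. Fix any $r \in R$; for all sufficiently large $k$ we have $N_k \geq r$, so $r \in R \cap [N_k]$ and therefore $\lVert r \alpha_{N_k} \rVert_{\T^d} \geq \epsilon$. Passing to the limit and using continuity of the map $\beta \mapsto \lVert r \beta \rVert_{\T^d}$ gives $\lVert r \alpha \rVert_{\T^d} \geq \epsilon$. Since this holds for every $r \in R$, the entire orbit $R\alpha = \{r\alpha : r \in R\}$ stays at distance at least $\epsilon$ from $0$, and so does its closure $\overline{R\alpha}$ in $\T^d$. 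Hence $0 \notin \overline{R\alpha}$, contradicting the assumption that $R$ is a set of Bohr recurrence.

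I do not expect any serious obstacle: this is essentially just the standard compactness-plus-continuity upgrade from a pointwise existence statement to a uniform one. The only small care is to ensure that one is allowed to pass to the limit in the inequality $\lVert r \alpha_{N_k} \rVert_{\T^d} \geq \epsilon$, which is immediate once $r$ is fixed and $k$ is large enough to guarantee $r \leq N_k$. The value of $N$ produced by the argument is not effective, but the statement of the lemma only requires existence, so this is not an issue.
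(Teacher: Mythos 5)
Your argument is correct and is essentially identical to the paper's proof: both proceed by contradiction, extract an accumulation point of the sequence $(\alpha_N)_{N\in\N}$ by compactness of $\T^d$, and pass to the limit in the inequality $\lVert r\alpha_N\rVert_{\T^d}\geq\epsilon$ to contradict Bohr recurrence. You simply spell out the subsequence and continuity details slightly more explicitly than the paper does.
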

\begin{proof}
    By contradiction, assume otherwise. Then there exists $d \in \N$, $\epsilon > 0$ such that for all $N \in \N$, there exists $\alpha_N \in \T^d$ satisfying $\lVert r \alpha_N \rVert_{\T^d} \geq \epsilon$ for every $r \in R \cap [N]$. Let $\alpha$ be an accumulation point of $\{\alpha_N: N \in \N\}$ in $\T^d$. Since $\T^d$ is compact, such $\alpha$ exists. Then it follows that $\lVert r \alpha \rVert_{\T^d} \geq \epsilon$ for all $r \in R$. This contradicts the hypothesis that $R$ is a set of Bohr recurrence. 
\end{proof}

In order to prove the union of an $I_0$ set with a finite set is still $I_0$, we first show that a set of Bohr recurrence can be partitioned into two sets of Bohr recurrence (\cref{prop:may-8-2}). A similar approach is also taken by Ramsey \cite{Ramsey_1980}.
\begin{proof}[Proof of \cref{prop:may-8-2}]
    Enumerate the countable set $\mathcal{S} = \{(\T^d, 1/n): d, n \in \N\}$ as $\{(\T_k, \epsilon_k):k \in \N\}$. For a set of Bohr recurrence $R$ and $(\T_k, \epsilon_k) \in \mathcal{S}$, define $N(R, \T_k, \epsilon_k)$ to be the number $N$ associated to $R, \T_k, \epsilon_k$ as in \cref{lem:nov-29-1}. 
    
    Fix a set of Bohr recurrence $R$ and let $A_1 = R \cap [N(R, \T_1, \epsilon_1)]$. Since $A_1$ is finite, $R \setminus A_1$ is still a set of Bohr recurrence. Let $B_1 = (R \setminus A_1) \cap [N(R \setminus A_1, \T_1, \epsilon_1)]$ and $R_1 = R \setminus (A_1 \cup B_1)$. Inductively, for $k \geq 2$, define 
    \[
        A_{k} = R_{k-1} \cap [N(R_{k-1}, \T_k, \epsilon_k)]
    \] 
    \[
    B_k = (R_{k-1} \setminus A_k) \cap [N(R_{k-1} \setminus A_k, \T_k, \epsilon_k)]
    \]
    \[
    R_k = R_{k-1} \setminus (A_k \cup B_k)
    \]
    For all $k \in \N$, the sets $A_k, B_k$ are finite. Hence $R_k$ is still a set of Bohr recurrence. Therefore $N(R_{k-1}, \T_{k}, \epsilon_{k})$ and $N(R_{k-1} \setminus A_{k}, \T_{k}, \epsilon_{k})$ are well-defined.  
    
    Let $A = \bigcup_{k=1}^{\infty} A_k$ and $B = \bigcup_{k=1}^{\infty} B_k$. By construction, $A$ and $B$ are disjoint subsets of $R$. It remains to show they are sets of Bohr recurrence. For any $\T^d$, $\alpha \in \T^d$ and $\epsilon > 0$, there exists $k \in \N$, such that $\T_k = \T^d$ and $\epsilon_k < \epsilon$. Then by definition of $A_k$, there exists $r \in A_k \subset A$ such that $\lVert r \alpha \rVert_{\T^d} = \lVert r \alpha \rVert_{\T_k} < \epsilon_k < \epsilon$. Since $\T^d, \alpha$ and $\epsilon$ are arbitrary, $A$ is a set of Bohr recurrence. Similar argument applies to $B$.
\end{proof}

\begin{proof}[Proof of \cref{thm:may-8-3}]
    Let $E$ be an $I_0$ set and $F$ be a finite set. We want to show $E \cup F$ is still $I_0$. By induction, we can assume $F$ consists of a single element $m \in \N$. By \cref{lem:dec-31-4} and \cref{prop:dec-31-2}, it suffices to show $E - \{m\}$ is not a set of Bohr recurrence. By contradiction, assume otherwise. According to \cref{prop:may-8-2}, the set $E-\{m\}$ can be partitioned as $A \cup B$ where $A$ and $B$ are two sets of Bohr recurrence. Hence $(A + \{m\}) - (B + \{m\}) = A - B$ is a set of Bohr recurrence. But $E = (A + \{m\}) \cup (B + \{m\})$, thus $E$ is not $I_0$ by \cref{prop:dec-31-2} and \cref{thm:dec-31-1}, a contradiction.
\end{proof}

Following result is also a corollary of \cref{prop:may-8-2}.

\begin{corollary}
\label{cor:bohr_restricted}
    Sets of Bohr recurrence are not $I_0$.
\end{corollary}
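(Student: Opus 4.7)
The plan is to deduce this corollary from Proposition \ref{prop:may-8-2} together with the Hartman--Ryll-Nardzewski characterization (\cref{thm:dec-31-1}) and its reformulation via Bohr recurrence (\cref{prop:dec-31-2}). Given a set of Bohr recurrence $R$, to prove $R$ is not $I_0$ it suffices, by \cref{thm:dec-31-1} and \cref{prop:dec-31-2}, to exhibit two disjoint subsets $A, B \subset R$ whose difference $A - B$ is itself a set of Bohr recurrence.

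First, I would apply \cref{prop:may-8-2} to partition $R = A \sqcup B$ with both $A$ and $B$ sets of Bohr recurrence. Then the key observation is the elementary fact that the difference of two sets of Bohr recurrence is again a set of Bohr recurrence: given $\alpha \in \T^d$, since $A$ is of Bohr recurrence one can find a sequence $(a_n) \subset A$ with $a_n \alpha \to 0$ in $\T^d$, and similarly a sequence $(b_n) \subset B$ with $b_n \alpha \to 0$; the differences $a_n - b_n$ then satisfy $(a_n - b_n)\alpha \to 0$, so $0 \in \overline{(A - B)\alpha}$. (If one insists $A - B \subset \N$, one simply selects the positive differences, which is possible by replacing $\alpha$ with $-\alpha$ if necessary and using the same argument.)

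Putting this together: $A$ and $B$ are disjoint subsets of $R$ and $A - B$ is a set of Bohr recurrence, so by \cref{prop:dec-31-2} the sets $A$ and $B$ are not separable by any rotation, and therefore by \cref{thm:dec-31-1} the set $R$ is not $I_0$.

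There is no real obstacle here since all the work has already been done in \cref{prop:may-8-2}; the corollary is essentially a formal consequence. The only mild point of care is the bookkeeping in the difference-of-recurrence-sets argument, ensuring that one ends up with a subset of $\N$ rather than $\Z$, but this is handled by the symmetry $\alpha \leftrightarrow -\alpha$ in the definition of Bohr recurrence.
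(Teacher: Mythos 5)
Your proposal is correct and follows exactly the paper's own route: partition $R$ into two sets of Bohr recurrence via \cref{prop:may-8-2}, observe that their difference is again a set of Bohr recurrence, and conclude via \cref{prop:dec-31-2} and \cref{thm:dec-31-1}. The only difference is that you spell out the (elementary) verification that the difference of two sets of Bohr recurrence is a set of Bohr recurrence, which the paper asserts without proof.
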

\begin{proof}
    Let $R$ be a set of Bohr recurrence. By \cref{prop:may-8-2}, there exists a partition $R = A \cup B$ where both $A$ and $B$ are sets of Bohr recurrence. Therefore $A - B$ is a set of Bohr recurrence, and thus $R$ is not $I_0$ by \cref{prop:dec-31-2} and \cref{thm:dec-31-1}.
\end{proof}

\section{Sets that are denser than lacunaries}
\label{sec:denser_than_lacunaries}

\begin{definition}
\label{def:dec-26-1}
    For a finite dimensional torus $\T^d$, define the distance between $x = (x_1, \ldots, x_d)$ and $y = (y_1, \ldots, y_d)$ on $\T^d$ by $dist(x,y) = max \{|x_i - y_i|, 1 \leq i \leq d\}$, and the distance between two finite subsets $X, Y$ of $\T^d$ by $dist(X,Y) = \min\{dist(x,y): x \in X, y \in Y\}$.
\end{definition}
\begin{remark*}
    We use above metric only for a clearer presentation. Standard flat torus metric also works.
\end{remark*}

\begin{definition*}
    For $\epsilon > 0$ and $d \in \N$, say two finite subsets of integers $A$ and $B$ are \emph{$(\epsilon, d)$-separable} if there exists $\alpha \in \T^d$ such that $dist(A \alpha, B \alpha) \geq \epsilon$.   
\end{definition*}

\begin{lemma}
\label{lem:dec-26-1}
    Let $E = \{r_n\}_{n \in \N}$ be a set that is denser than lacunaries. Then for every $\epsilon > 0$ and $d \in \N$, there exist $A, B \subset E$ finite and disjoint that are not $(\epsilon, d)$-separable. 
\end{lemma}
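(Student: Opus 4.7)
The plan is to exhibit $A$ and $B$ by applying a probabilistic partition to a single long, narrow block of $E$ produced via the denser-than-lacunary hypothesis.

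First, I would establish the key consequence of denser-than-lacunary: for every $M \in \N$ and every $\lambda > 1$, there exist arbitrarily large indices $n$ with $r_{n+M}/r_n < \lambda$. Otherwise $r_{n+M}/r_n \geq \lambda$ for all large $n$, and iterating would yield $r_n \gtrsim (\lambda^{1/M})^n$, contradicting $r_n/s_n \to 0$ for the lacunary sequence $s_n = (\lambda^{1/M})^n$. This produces $M+1$ consecutive elements of $E$ confined to the window $[r_n, \lambda r_n]$.

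Next, with $M+1 > T \cdot (2/\epsilon)^d$ for a multiplicity parameter $T$ to be chosen, I would partition $\T^d$ into $(2/\epsilon)^d$ cubes of side $\epsilon/2$; for any $\alpha \in \T^d$, pigeonhole on the $M+1$ points $r_{n+k}\alpha$ forces some cube to contain at least $T+1$ of them, hence pairwise within $\epsilon/2$ in the sup metric. Then I would place each $r_{n+k}$ independently and uniformly into either $A$ or $B$. For a fixed $\alpha$, the probability that all $T+1$ cluster indices end up on the same side is at most $2^{-T}$; otherwise some cross pair $(a,b) \in A \times B$ in the cube gives $\lVert(a-b)\alpha\rVert_{\T^d} < \epsilon/2$. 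Covering $\T^d$ with an $\eta$-net of size $O((\lambda r_n/\epsilon)^d)$, where $\eta = \epsilon/(2 \lambda r_n)$ so that continuity (using $|a-b| \leq \lambda r_n$) propagates the $\epsilon/2$-bound at net points to an $\epsilon$-bound at all nearby $\alpha$, a union bound yields a deterministic partition succeeding at every net point---and hence at every $\alpha \in \T^d$---provided $T > d\log_2(\lambda r_n/\epsilon) + O(1)$.

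The hard part will be the joint calibration of $M$, $T$, $\lambda$, and the block index $n$: the value $r_n$ (and hence the $\eta$-net size) depends on how sparse $E$ is, and it feeds into the required multiplicity $T$. However, the constraints $M + 1 > T \cdot (2/\epsilon)^d$ and $T > d\log_2(\lambda r_n/\epsilon) + O(1)$ admit a finite, consistent solution for any fixed $E$, $\epsilon$, and $d$ (with $M$ polynomial in $(2/\epsilon)^d$ and logarithmic in $r_n$), yielding the desired finite disjoint subsets $A, B \subset E$.
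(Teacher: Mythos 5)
Your core mechanism (pigeonhole clustering, a random bipartition, and a union bound over an $\eta$-net whose size is polynomial in the largest element) is sound, and it is in essence a probabilistic reformulation of the paper's argument, which deterministically counts the separable bipartitions achievable as $\alpha$ ranges over $\T^{d}$ (at most $2^{(1/\epsilon)^d}(N(N-1)r_N)^d$, via connected components and ``critical faces'') and compares with $2^N$. Both proofs ultimately rest on the same fact: only polynomially many (in $r_N$) values of $\alpha$ need to be distinguished, while there are exponentially many partitions.

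However, your calibration step contains a genuine circularity that you have not resolved, and as set up it can actually fail. You fix $M$ and $\lambda$ first, then invoke the denser-than-lacunary hypothesis to find some (possibly enormous) $n$ with $r_{n+M}/r_n<\lambda$; but the multiplicity $T$ must exceed $d\log_2(\lambda r_n/\epsilon)+O(1)$, and $M$ must exceed $T(2/\epsilon)^d$, so $M$ depends on $r_n$, which is only determined after $M$ is chosen. This is not merely a bookkeeping issue: for a set with $r_n\approx 2^{\sqrt{n}}$ (which is denser than lacunaries, since $2^{\sqrt n}/q^n\to 0$ for every $q>1$), a window with $r_{n+M}/r_n<\lambda$ forces $M\lesssim 2\sqrt{n}\log_2\lambda$, while your constraints demand $M\gtrsim d(2/\epsilon)^d\sqrt{n}$; for small $\epsilon$ and fixed $\lambda$ these are incompatible for all $n$. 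The repair is to drop the multiplicative window altogether: take $F=\{r_1,\dots,r_{M+1}\}$, note that every difference is bounded by $r_{M+1}$ so the net has size $O((r_{M+1}/\epsilon)^d)$, and observe that the hypothesis applied to the lacunary sequence $s_m=\lceil 2^{m/C}\rceil$ forces $\log_2 r_{M+1}\le (M+1)/C$ for every $C$ and all large $M$; hence $T\approx d\log_2(r_{M+1}/\epsilon)=o(M)$ and the requirement $M+1>T\cdot\lceil 2/\epsilon\rceil^d$ is met for $M$ large. With that substitution (and your first paragraph deleted, since it is no longer needed), the argument goes through.
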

\begin{proof}
    By contradiction, assume there exists $\epsilon_0 > 0, d_0 \in \N$ such that every two finite disjoint $A, B \subset E$ are $(\epsilon_0, d_0)$-separable. For $F \subset E$ finite, call a subset $A$ of $F$ to be  \emph{$F$-nice} if $A$ and $F \setminus A$ are $(\epsilon_0, d_0)$-separable. By assumption, all subsets of $F$ are $F$-nice. 
    
    Let $F$ be a subset of $E$ of the form $\{r_1, r_2, \ldots, r_N\}$ for some $N \in \N$. For an $F$-nice set $A$, there exists $\alpha \in \T^{d_0}$ such that $dist(A \alpha, (F \setminus A) \alpha) \geq \epsilon_0$. Associate that $\alpha$ to $A$ by saying \emph{$\alpha$ creates $A$}. We will count the number of $F$-nice sets created by $\alpha$ as $\alpha$ ranges over $\T^{d_0}$. 
    
    For illustrative purpose, first assume $d_0 = 1$. For $\alpha \in \T$, the set $F \alpha$ is a finite subset of $\T$. Connect two elements of $F \alpha$ if their distance is less than $\epsilon_0$. Then the number of connected components is not greater than $1/\epsilon_0$. If $A$ is an $F$-nice set created by $\alpha$ and $r \alpha \in A \alpha$, every element of the connected component containing $r \alpha$ is also in $A \alpha$. Therefore, the number of $F$-nice sets created by $\alpha$ is not greater than $2^{1/\epsilon_0}$.
    
    If one perturbs $\alpha$ a little, it does not create new collection of $F$-nice sets. To make this idea precise, call $\beta \in \T$ a \emph{critical point} if $dist(r_i \beta, r_j \beta) = dist((r_j - r_i)\beta, 0) = \epsilon_0$ for some $1\leq i < j \leq N$. For each $i < j$, there are $2(r_j - r_i)$ critical points associated to them. Hence in total, there are $\sum_{1 \leq i < j \leq N} 2 (r_j - r_i) < N(N-1) r_N$ critical points. 
    
    These critical points partition $\T$ into less than $N(N-1) r_N$ subintervals. As $\alpha$ ranges over $\T$, it will create a new collection of $F$-nice sets only if it crosses a critical point. Therefore for all $\alpha$ in a subinterval, they create the same collection of $F$-nice sets.
    Thus in total there are less than
    \begin{equation}
    \label{eq:mar-22-1}
        2^{1/\epsilon_0} N(N-1) r_N 
    \end{equation}
    $F$-nice sets. Since $E$ is denser than lacunaries, for a sufficiently large $N$,  \eqref{eq:mar-22-1} will be less than $2^N$, the number of subsets of $F$. This contradicts our assumption that every subset of $F$ is an $F$-nice set.
    
    Now assume $d_0 \in \N$ arbitrary. For $F \subset E$ finite and $\alpha \in T^{d_0}$, connect two elements of $F \alpha \subset \T^{d_0}$ if their distance is less than $\epsilon_0$, with the distance is defined in \cref{def:dec-26-1}. Divide $\T^{d_0}$ into $(1/\epsilon_0)^{d_0}$ cubes of side length $\epsilon_0$. Then each cube contains at most one connected component of $F \alpha$. Hence the number of connected components is not greater than $(1/\epsilon_0)^{d_0}$. As before, this implies each $\alpha \in \T^{d_0}$ creates at most $2^{(1/\epsilon_0)^{d_0}}$ $F$-nice sets. 
    
    Similar to the case $d_0 = 1$, if we pertubes $\alpha$ a little, it does not create new collection of $F$-nice sets. To make this statement precise, we introduce a higher dimensional analogue of critical points called critical faces.
    
    For each pair $i < j \in \{1, 2, \ldots, N\}$, the set 
    \begin{equation*}
        \{ \beta \in \T^{d_0}: d(r_i \beta, r_j \beta) = d((r_j - r_i) \beta, 0_{\T^{d_0}}) =  \epsilon_0\}
    \end{equation*}
    consists of $(r_j - r_i)^{d_0}$ empty cubes, so in total there are $\sum_{1 \leq i < j \leq N} (r_j - r_i)^{d_0}$ cubes. The faces of these cubes are called \emph{critical faces}.
    
    By a very crude estimation, the critical faces partition $\T^{d_0}$ into less than 
    \begin{equation*}
        \left( \sum_{1 \leq i < j \leq N} 2(r_j - r_i) \right)^{d_0} <(N(N-1) r_N)^{d_0}
    \end{equation*}
    regions. 
    
    As $\alpha$ ranges over $\T^{d_0}$, it only creates a new collection of $F$-nice sets if it crosses a critical face. Therefore for all $\alpha$ in a region, they create the same collection of $F$-nice sets. Hence there is less than
    \begin{equation*}
        2^{(1/\epsilon_0)^{d_0}} \times (N(N-1) r_N)^{d_0} 
    \end{equation*}
    $F$-nice sets. For $N$ sufficiently large, this number is less than $2^N$. This is again a contradiction. 
\end{proof}

\begin{proof}[Proof of \cref{thm:may-8-1}]
Let $E \subset \N$ be a set that is denser than lacunaries. By \cref{lem:dec-26-1}, there exists two disjoint finite $A_1, B_1 \subset E$ that are not $(1, 1)$-separable. Let $E_2 = E \setminus (A_1 \cup B_1)$, then $E_2$ is still denser than lacunaries. There exists disjoint finite $A_2, B_2 \subset E_2$ that are not $(1/2, 2)$-separable. In general, by induction, we get a sequence of set $(A_k)_{k \in \N}$ and $(B_k)_{k \in \N}$ pairwise disjoint and for each $k \in \N$, the sets $A_k$ and $B_k$ are not $(1/k, k)$-separable. Let $A = \bigcup_{k=1}^\infty A_k$ and $B = \bigcup_{k=1}^{\infty} B_k$. Then $A, B \subset E$ disjoint and not separable by any rotation. Hence $E$ is not $I_0$ by \cref{thm:dec-31-1}. 
\end{proof}

\section{Interpolation sets for nilsequences}
\label{sec:nilsequences}

\subsection{A necessary and sufficient condition}
\label{sec:nessary-sufficient-nilsequences}
    \begin{definition*}
        Two sets $A, B \subset \N$ are said to be \emph{separable by some $k$-step nilrotation} if there exists a $k$-step nilsystem $(G/\Gamma, g)$ such that the closures in $G/\Gamma$ of $g^A \Gamma := \{g^a \Gamma: a \in A\}$ and $g^B \Gamma := \{g^b \Gamma: b \in b\}$ are disjoint.
    \end{definition*}
    
    The following proposition is analogous to Hartman-Ryll-Nardzewski characterization (\cref{thm:dec-31-1}); its proof is omitted because it is identical.
    \begin{proposition}
        For $k \in \N$, a set $E \subset \N$ is $k$-step-$I_0$ if and only if every two disjoint subsets of $E$ are separable by some $k$-step nilrotation.
    \end{proposition}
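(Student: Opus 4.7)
The plan is to follow the proof of Theorem \ref{thm:dec-31-1} line by line, substituting a $k$-step nilmanifold $G/\Gamma$ for the torus $\T^d$ and ``uniform limit of $k$-step nilsequences'' for ``almost periodic sequence'', and using Urysohn's lemma on $G/\Gamma$ in place of Urysohn on $\T^d$. For the forward direction, assume $E$ is $k$-step-$I_0$ and let $A, B \subset E$ be disjoint. Pick a uniform limit of $k$-step nilsequences $(\psi(n))_{n \in \N}$ with $\psi \equiv 0$ on $A$ and $\psi \equiv 1$ on $B$. Fix $\epsilon < 1/4$ and approximate $\psi$ uniformly within $\epsilon$ by a basic $k$-step nilsequence $F(g^n \cdot x)$ for some nilsystem $(G/\Gamma, g)$, $x \in G/\Gamma$ and $F \in C(G/\Gamma)$. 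Then $F$ maps the orbit $\{g^a \cdot x: a \in A\}$ into $\overline{B(0,\epsilon)}$ and $\{g^b \cdot x: b \in B\}$ into $\overline{B(1,\epsilon)}$; by continuity the same containment holds for the closures in $G/\Gamma$, and disjointness of $\overline{B(0,\epsilon)}$ and $\overline{B(1,\epsilon)}$ forces the closures of the two orbits to be disjoint as well. This gives separability by a $k$-step nilrotation.

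For the backward direction, let $(b(n))_{n \in \N}$ be a bounded sequence, which we may assume is real valued with $0 \leq b(n) \leq 1$. Decompose $b(n) = \sum_{k=1}^\infty b_k(n)$ with $b_k(n) \in \{0, 1/2^k\}$ exactly as in the proof of Theorem \ref{thm:dec-31-1}. For each $k$, the sets $A = \{r_n: b_k(n) = 0\}$ and $B = \{r_n: b_k(n) = 1/2^k\}$ are disjoint subsets of $E$, hence by hypothesis separable by some $k$-step nilrotation $(G/\Gamma, g)$, so $\overline{g^A \Gamma}$ and $\overline{g^B \Gamma}$ are disjoint compact subsets of $G/\Gamma$. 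Urysohn's lemma produces $F \in C(G/\Gamma)$ with $F \equiv 0$ on $\overline{g^A \Gamma}$, $F \equiv 1/2^k$ on $\overline{g^B \Gamma}$, and $0 \leq F \leq 1/2^k$ everywhere; set $\psi_k(n) = F(g^n \Gamma)$, a $k$-step nilsequence interpolating $b_k$ on $E$ with $\lVert \psi_k \rVert_\infty \leq 1/2^k$. The series $\psi(n) = \sum_{k=1}^\infty \psi_k(n)$ converges uniformly; each partial sum is a finite sum of $k$-step nilsequences and hence itself a $k$-step nilsequence, using the fact recorded in \cref{sec:preliminary_nilsequences} that the family of $k$-step nilsequences is a subalgebra of $\ell^\infty$. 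Therefore $\psi$ is a uniform limit of $k$-step nilsequences with $\psi(r_n) = b(n)$, as required.

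The only mild obstacle is a base-point bookkeeping issue in the forward direction: the approximating nilsequence has the form $F(g^n \cdot x)$ with an arbitrary base point $x \in G/\Gamma$, whereas the separability definition is stated at the identity coset $\Gamma$. This is handled by writing $x = h\Gamma$, setting $\tilde g = h^{-1} g h$ and $\tilde F(y) = F(h \cdot y)$, and observing $F(g^n \cdot x) = \tilde F(\tilde g^n \Gamma)$, so that the same argument applies on the conjugated nilsystem with base point $\Gamma$. Everything else is a verbatim transcription of the proof of \cref{thm:dec-31-1}, which explains the author's decision to omit it.
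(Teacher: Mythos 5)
Your proof is correct and is exactly the argument the paper intends: the paper omits the proof of this proposition precisely because it is a verbatim transcription of the proof of \cref{thm:dec-31-1} with the torus replaced by a nilmanifold, which is what you carried out (your extra care with the base point via conjugation is a detail the paper leaves implicit, and it is handled correctly).
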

    
\subsection{Positively dense subsets of polynomial sets}
\label{sec:dense_subset_of_polynomial}
The goal of this section is to prove \cref{prop:jan-12-1}. 

\begin{definition*}
    Let $E = \{s_n\}_{n \in \mathbb{N}} \subset \N$ and $\phi = (I_N)_{N \in \mathbb{N}}$ be a sequence of intervals on $\mathbb{N}$ with $\lim_{N \to \infty} |I_N| = \infty$. We say the pair $\{E, \phi\}$ is \emph{good for averaging nilsequences} if for every nilsequence $(\psi(n))_{n \in \mathbb{N}}$, 
    \[
        \lim_{N \to \infty} \frac{1}{|I_N|} \sum_{n \in I_N} \psi(s_n)
    \]
    exists.
\end{definition*}

\begin{definition*}
    Let $E$ and $\phi$ be as before. Then for every $F = \{s_{n_i}\}_{i \in \N} \subset E$, the upper density of $F$ relative to $E$ along $(I_N)_{N \in \N}$ is defined as
    \[
        \bar{d}_{E, \phi}(F) := 
        \limsup_{N \to \infty} \frac{|\{n_i: i \in \N\} \cap I_N|}{|I_N|} 
    \]
\end{definition*}

We need following lemma which follows from \cite[Lemma 3.10]{moreira-richter-robertson}.
\begin{lemma}
    \label{lemma:subfolner}
    Suppose $\{E = \{s_n\}_{n \in \mathbb{N}}, \phi = (I_N)_{N \in \mathbb{N}}\}$ is good for averaging nilsequences and $(f(n))_{n \in \mathbb{N}}$ be a bounded sequence. Then there exists a sub-F{\o}lner sequence $\tilde{\phi} \subset \phi$ such that
    \[
        \lim_{\substack{N \to \infty \\ I_N \in \tilde{\phi}}} \frac{1}{|I_N|} \sum_{n \in I_N} \psi(s_n) f(n)
    \]
    exists for all nilsequence $(\psi(n))_{n \in \mathbb{N}}$.
\end{lemma}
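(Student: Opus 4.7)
The plan is a diagonal extraction over a countable family of nilsequences that is ``dense'' in the space of all nilsequences in a mode of approximation compatible with the averaging. Once such a family $\{\psi_j\}_{j\in\N}$ is chosen, the sequence $\left(\frac{1}{|I_N|}\sum_{n\in I_N}\psi_j(s_n)f(n)\right)_{N\in\N}$ is bounded by $\|\psi_j\|_\infty\|f\|_\infty$, so by Bolzano--Weierstrass and a diagonal argument I will extract a sub-F{\o}lner sequence $\tilde\phi\subset\phi$ along which the limit exists for every $\psi_j$. For an arbitrary nilsequence $\psi$ and any $\varepsilon>0$, I will choose $\psi_j$ approximating $\psi$ to precision $\varepsilon$, and use the uniform bound on $|(\psi-\psi_j)(s_n)f(n)|$ to show that the averages along $\tilde\phi$ form a Cauchy sequence in $N$, and hence converge.

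\textbf{Constructing the countable family.} For each $k\in\N$, a $k$-step nilsequence has the form $\psi(n)=F(g^n x_0)$ for some pointed $k$-step nilsystem $(X=G/\Gamma,g,x_0)$ and $F\in C(X)$. Up to isomorphism, only countably many nilmanifolds $X$ arise in each fixed dimension, and for each such $X$ the space $C(X)$ is separable (since $X$ is compact metric). I will enumerate countably many pointed nilsystems $(X_i,g_i,x_i)$ with $(g_i,x_i)$ lying in a countable dense subset of $G_i\times X_i$, and for each $X_i$ a countable dense family of Lipschitz functions in $C(X_i)$. The resulting countable collection $\{\psi_j\}_{j\in\N}$ is my candidate.

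\textbf{Main obstacle.} The principal difficulty will be establishing density in a mode of approximation strong enough to close the argument. Density in $\ell^\infty$ will fail: for distinct irrationals $\alpha,\alpha'$ with $\alpha-\alpha'$ irrational, the sequences $(e^{2\pi in\alpha})_n$ and $(e^{2\pi in\alpha'})_n$ are at $\ell^\infty$-distance $2$, so the $\ell^\infty$-closure of nilsequences is non-separable. To get around this, I will weaken the mode of approximation to one tailored to the averaging, for example the seminorm $\|\psi\|_{E,\phi,f}:=\limsup_{N\to\infty}\frac{1}{|I_N|}\sum_{n\in I_N}|\psi(s_n)f(n)|$, under which the triangle inequality still gives the bound needed in the final step. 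The technique of Moreira--Richter--Robertson~\cite{moreira-richter-robertson} (in particular their Lemma~3.10) handles exactly this kind of extraction in a closely related setting; the hypothesis that $\{E,\phi\}$ is good for averaging nilsequences provides the regularity of orbits on $E$ needed to apply their argument here.
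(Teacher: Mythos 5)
The paper does not prove this lemma at all: it is stated as an immediate consequence of Lemma~3.10 of Moreira--Richter--Robertson, so the only question is whether your sketch supplies an actual proof or a correct reduction to that result. It does neither, because the step you yourself flag as the ``main obstacle'' is not merely hard --- your proposed resolution of it is false. Your plan requires a countable family $\{\psi_j\}_{j\in\N}$ of nilsequences that is dense in the averaged seminorm $\lVert\psi\rVert_{E,\phi,f}=\limsup_{N}\frac{1}{|I_N|}\sum_{n\in I_N}|\psi(s_n)f(n)|$, but the nilsequences are non-separable in this seminorm just as in $\ell^\infty$. Take the admissible instance $s_n=n$, $I_N=[1,N]$, $f\equiv 1$ (so the seminorm is the Besicovitch $L^1$ seminorm). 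Each fixed nilsequence $\psi_j$ has countable spectrum, i.e.\ $\lim_{N}\frac1N\sum_{n\le N}\psi_j(n)e^{-2\pi i n\alpha}=0$ for all but countably many $\alpha\in\T$; choosing $\alpha$ outside the union over $j$ of these countable sets, the $1$-step nilsequence $\psi(n)=e^{2\pi i n\alpha}$ satisfies $\liminf_{N}\frac1N\sum_{n\le N}|\psi(n)-\psi_j(n)|^2\ge 1$ for every $j$, and hence $\lVert\psi-\psi_j\rVert_{E,\phi,f}$ is bounded below by a positive absolute constant for every $j$. So no countable family is dense in your seminorm, and the Cauchy argument in your final step never gets off the ground; note that this breaks your method even in an instance where the conclusion of the lemma is trivially true, which shows the defect is in the approach rather than the statement.

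Your closing appeal to the ``technique'' of Moreira--Richter--Robertson does not repair this: their Lemma~3.10 essentially \emph{is} the statement being proved, so invoking it at the point where your argument fails reduces the proposal to the bare citation the paper already gives, minus the (incorrect) density scaffolding around it. Whatever mechanism their proof uses, it cannot be separability of the nilsequences in an averaged seminorm; the genuine content is that the correlation of the single fixed bounded sequence $f$ with an arbitrary nilsequence is controlled by countably much data attached to $f$ (a Hilbert-space/Bessel-type phenomenon on each fixed nilmanifold), which is not a density statement about the family of all nilsequences. If you want to keep a self-contained argument, that is the idea you would need to develop; otherwise the honest proof is the one-line reduction to the cited lemma.
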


Examples of pairs $\{\{s_n\}_{n \in \N}, (I_N)_{N \in \N}\}$ that are good for averaging nilsequences are:
\begin{enumerate}
    \item $s_n = P(n)$ where $P \in \Q[n]$ taking integer values on $\Z$ and $(I_N)_{N \in \N}$ is any sequence of intervals such that $\lim_{N \to \infty} |I_N| = \infty$ (\cite{leib05}).
    
    \item $s_n = P(p_n)$ where $P \in \Q[n]$ taking integer values on $\Z$, $p_n$ is the n-th prime and $I_N = [1,N]$ (\cite{Green_Tao12}).
    
    \item $s_n = \lfloor n^c \rfloor$ for any $c > 0$ and $I_N = [1,N]$ (\cite{Frantzikinakis09}).
\end{enumerate}

If the pair $\{E, \phi\}$ is good for averaging nilsequences, then $E$ is not $k$-step-$I_0$ for any $k \in \N$ because not all bounded sequences have the averages along $\phi$ converge. Now we prove it is even true for a positively dense subset of $E$.

\begin{proposition}
    \label{prop:positive-density-polynomial}
    Suppose the pair $\{E, \phi \}$ is good for averaging nilsequences. Then every subset of $E$ that has positive relative upper density along $\phi$ is not $k$-step-$I_0$ for any $k \in \N$.
\end{proposition}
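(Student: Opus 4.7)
The plan is to argue by contradiction. Assume $F = \{s_{n_i}\}_{i \in \N} \subset E$ has $\bar{d}_{E,\phi}(F) = d > 0$ and is $k$-step-$I_0$. Set $f(n) = 1_F(s_n)$, which is $1$ precisely when $n \in \{n_i\}$, and apply \cref{lemma:subfolner} to this $f$ to obtain a sub-F\o lner subsequence $\tilde\phi \subset \phi$ along which
\[
    \frac{1}{|I_N|}\sum_{n \in I_N}\psi(s_n) f(n)
\]
converges for every nilsequence $\psi$. A standard $\varepsilon/3$ argument upgrades this to every uniform limit $\psi$ of $k$-step nilsequences: if $\psi_m \to \psi$ uniformly with each $\psi_m$ a nilsequence, the limits $L_m$ along $\tilde\phi$ form a Cauchy sequence whose limit agrees with the average of $\psi$ along $\tilde\phi$.

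Next I would pass to a further subsequence of $\tilde\phi$ along which $|F \cap I_N|/|I_N| \to d$, using positive upper density. Then iteratively thin this to a sequence $(I_{N_k})$ that is \emph{asymptotically disjoint from its past}: at step $k$, pick $I_{N_k}$ long enough that $|I_{N_k} \cap U_{k-1}| \leq 2^{-k}|I_{N_k}|$, where $U_{k-1} := \bigcup_{j < k} I_{N_j}$ is a fixed finite set. This is possible because $|I_N| \to \infty$ while $|U_{k-1}|$ stays finite at each stage. As a consequence, $|F \cap (I_{N_k} \setminus U_{k-1})|/|I_{N_k}| \to d$ as well.

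Now I would define $b \in \{0,1\}^\N$ by setting $b(i) = 1$ iff $n_i$ lies in $I_{N_k} \setminus U_{k-1}$ for some odd $k$, and $b(i) = 0$ otherwise. For odd $k$, the sum $\sum_{i : n_i \in I_{N_k}} b(i)$ counts essentially all of $F \cap (I_{N_k} \setminus U_{k-1})$, whose size is $\approx d |I_{N_k}|$, with an error of at most $|I_{N_k} \cap U_{k-1}| \leq 2^{-k}|I_{N_k}|$; hence the average tends to $d$. For even $k$ the same error bound shows the average tends to $0$. So $\frac{1}{|I_{N_k}|}\sum_{i : n_i \in I_{N_k}} b(i)$ oscillates between $0$ and $d$ along $(I_{N_k})$.

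Finally, the $k$-step-$I_0$ hypothesis yields a uniform limit $\psi$ of $k$-step nilsequences with $\psi(s_{n_i}) = b(i)$ for all $i$. Since $\psi(s_n) f(n)$ vanishes outside $\{n_i\}$, one has
\[
    \frac{1}{|I_N|}\sum_{n \in I_N}\psi(s_n) f(n) = \frac{1}{|I_N|}\sum_{i : n_i \in I_N} b(i),
\]
which must converge along $\tilde\phi$ by the first paragraph, yet oscillates along $(I_{N_k}) \subset \tilde\phi$ by the previous paragraph — a contradiction. The main obstacle I anticipate is not any single step but the handling of overlapping intervals in $\tilde\phi$: intervals with bounded left endpoints are forced to be nested, and it is the iterative thinning scheme that makes the "fresh" portions of successive intervals dominant, after which the oscillating choice of $b$ writes itself.
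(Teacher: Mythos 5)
Your overall strategy is the same as the paper's: feed $f = 1_{\{n_i\}}$ into \cref{lemma:subfolner}, reduce the weighted average to an average of $\psi(s_{n_i})$ over the indices of $F$ falling in each interval, and then exhibit a bounded $0$--$1$ sequence whose averages along those index sets fail to converge. In fact your explicit "asymptotically disjoint from the past" construction of the oscillating sequence $b$ fills in a step the paper leaves implicit (it merely asserts that not all bounded sequences have convergent averages along $(K_N)$), and your $\varepsilon/3$ upgrade from nilsequences to uniform limits is a detail the paper also glosses over. So the route is essentially identical and, where it differs, more detailed.

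There is, however, one genuine gap: the order of your first two reductions. You apply \cref{lemma:subfolner} to $\{E,\phi\}$ first, obtaining $\tilde\phi \subset \phi$, and only afterwards try to "pass to a further subsequence of $\tilde\phi$ along which $|F\cap I_N|/|I_N| \to d$, using positive upper density." But the hypothesis $\bar{d}_{E,\phi}(F) = d > 0$ is a $\limsup$ over \emph{all} of $\phi$; the lemma is a pure existence statement and gives you no control over which subsequence $\tilde\phi$ it returns, so the relative density of $F$ along $\tilde\phi$ could have $\limsup$ equal to $0$ (e.g.\ if the density oscillates between $0$ and $d$ along $\phi$ and $\tilde\phi$ happens to select only the sparse intervals). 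In that case no further subsequence of $\tilde\phi$ sees positive density, the "odd $k$" averages of $b$ tend to $0$ rather than $d$, and no contradiction results. The fix is exactly the paper's ordering: first pass to a subsequence $\phi' \subset \phi$ along which $|F\cap I_N|/|I_N| \to d > 0$ (noting that $\{E,\phi'\}$ is still good for averaging nilsequences, since convergence along $\phi$ persists along any subsequence), and only then invoke \cref{lemma:subfolner} with $\phi'$ in place of $\phi$. With that transposition the rest of your argument goes through.
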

\begin{proof}
Let $F = \{s_{n_i}\}_{i \in \N}$ be a subset of $E = \{s_n\}_{n \in \N}$ that has positive relative upper density along $\phi = (I_N)_{N \in \N}$. Let $A = \{n_i: i \in \N\}$ and $f(n) = 1_A(n)$. Then by assumption, $A$ has positive density along some sub-F{\o}lner sequence $\phi'$ of $\phi$, i.e. the following limit exists and positive
\begin{equation}
\label{eq:mar-26-5}
    d_{\N, \phi'}(A) = \lim_{\substack{N \to \infty \\ I_N \in \phi'}} \frac{|I_N \cap A|}{|I_N|}
\end{equation}

By \cref{lemma:subfolner}, there exists a sub-F{\o}lner sequence $\tilde{\phi} = (J_N)_{N \in \N}$ of $\phi'$ such that the following limit exists for any nilsequence $(\psi(n))_{n \in \mathbb{N}}$
\[
    L(\psi) = \lim_{N \to \infty} \frac{1}{|J_N|} \sum_{n \in J_N} 1_A(n) \psi(s_n)
\]
$L(\psi)$ can be rewritten as
\begin{equation*}
    L(\psi) =
    \lim_{N \to \infty} \frac{|J_N \cap A|}{|J_N|} \frac{1}{|J_N \cap A|} \sum_{n \in J_N \cap A} \psi(s_n)
\end{equation*}
Combining with \eqref{eq:mar-26-5}, we have the following limit exists
\begin{equation}
\label{eq:mar-26-6}
    \lim_{N \to \infty} \frac{1}{|J_N \cap A|} \sum_{n \in J_N \cap A} \psi(s_n) = \lim_{N \to \infty} \frac{|J_N|}{|J_N \cap A|} L(\psi) = \frac{L(\psi)}{d_{\N, \phi'}(A)}
\end{equation}
for any nilsequence $(\psi(n))_{n \in \N}$.

For $N \in \N$, define $K_N = \{i \in \N: n_i \in J_N \cap A\}$. Because $A$ has positive density along $(J_N)_{N \in\N}$, the size of $K_N$ goes to infinity as $N \to \infty$. The limit in \eqref{eq:mar-26-6} can be rewritten as
\[
    \lim_{N \to \infty} \frac{1}{|K_N|} \sum_{i \in K_N} \psi(s_{n_i})
\]
Since not all bounded sequences have the average along $(K_N)_{N \in \N}$ converges, we have $\{s_{n_i}\}_{i \in \N}$ is not $k$-step-$I_0$ for any $k \in \N$.
\end{proof}

\cref{prop:jan-12-1} now follows from \cref{prop:positive-density-polynomial} when $E = \{P(n)\}_{n \in \N}$ and $\phi$ is the F{\o}lner sequence along which we obtain the subset of positive relative upper density.

\subsection{Example of a set which is \texorpdfstring{$2$-step-$I_0$}{2-step-I0} but not \texorpdfstring{$1$-step-$I_0$}{1-step-I0}}
\label{sec:2_step}
    \begin{customprop}{\ref{prop:apr-27-1}}
    \label{prop:mar-26-3}
        There exists a set that is $2$-step-$I_0$ but not $1$-step-$I_0$.
    \end{customprop}
    \begin{proof}
        First, we claim that if $\{r_n^2\}_{n \in \N}$ is $1$-step-$I_0$, then $\{r_n\}_{n \in \N}$ is $2$-step-$I_0$. Indeed, let $(b(n))_{n \in \N}$ be an arbitrary bounded sequence. Because $\{r_n^2\}_{n \in \N}$ is $1$-step-$I_0$, there exists an almost periodic sequence $(\theta(n))_{n \in \N}$ such that $\theta(r_n^2) = b(n)$ for all $n \in \N$. Let $\psi(n) = \theta(n^2)$, then $(\psi(n))_{n \in \N}$ is a uniform limit of $2$-step nilsequences (see \cref{sec:preliminary_nilsequences}). We have $\psi(r_n) = \theta(r_n^2) = b(n)$ for all $n \in \N$. Since $(b(n))_{n \in \N}$ is arbitrary, the claim is proved.
        
        It remains to construct a set $E = \{r_n\}_{n \in \N}$ that is not $1$-step-$I_0$ but $\{r_n^2\}_{n \in \N}$ is $1$-step-$I_0$. We choose $E$ of the form $E = \{s_n\}_{n \in \N} \cup \{s_n + n\}_{n \in \N}$ for some sufficiently fast-growing lacunary set $\{s_n\}_{n \in \N}$. First, by \cref{prop:dec-31-2} and \cref{thm:dec-31-1}, the set $E$ is not $1$-step-$I_0$ because $\{s_n + n\}_{n \in \N} - \{s_n\}_{n \in \N}$ contains $\N$, a set of Bohr recurrence. 
        
        By Strzelecki's Theorem, since $A = \{s_n^2 : n \in \N\}$ and $B = \{(s_n + n)^2: n \in \N\}$ are lacunary, they are $1$-step-$I_0$. By \cref{lem:dec-31-4}, for $A \cup B$ to be $1$-step-$I_0$, it suffices that $B-A$ is not a set of Bohr recurrence. 
        
        Note that every element of $B - A$ is of the form $(s_n + n)^2 - s_m^2 = 2n s_n + n^2 + s_n^2 - s_m^2$ for some $m,n \geq 1$. Let
        \[
            C = \{c_n\}_{n \in \N} \mbox{ with } c_n = 2n s_n + n^2
        \]
        and 
        \[
            D = \{d_n\}_{n \in \N} \mbox{ with } d_n = s_n^2
        \]
        
        Then every element of $B-A$ is a combination of one element of $C$ and two elements of $D$.
        Fix $0 <c < 1/2$. Our goal is to find an $\alpha \in \mathbb{T}$ such that $(B-A) \alpha \subset (1/2 - c, 1/2 + c)$. Let $\ell = c/3$, then this goal will be obtained if we find an $\alpha$ such that for all $n \in \N$,
        \begin{equation}
        \label{eq:mar-26-1}
            c_n \alpha \in I_C = (1/2, 1/2 + \ell)  \mbox{ and } d_n \alpha \in I_D = (0, \ell)
        \end{equation}
        
        Now choose the sequence $(s_n)_{n \in \N}$ growing fast enough such that for all $n \in \N$,
        \begin{equation}
        \label{eq:mar-26-2}
             \frac{d_n}{c_n} = \frac{s_n^2}{2n s_n + n^2} > \frac{1}{\ell} + 1 \mbox{ and } \frac{c_{n+1}}{d_n} = \frac{2(n+1) s_{n+1} + (n+1)^2}{s_n^2}  > \frac{1}{\ell} + 1
        \end{equation}
        Then $C \cup D$ becomes a lacunary set where the ordering of its elements looks like this
        \[
            \ldots < c_n < d_n < c_{n+1} < d_{n+1} < \ldots
        \]
        For $n \in \N$, the condition that $c_n \alpha \in I_C$ is equivalent to
        \[
            \alpha \in J_{C,n} := \bigcup_{t = 0}^{c_n - 1} \left( \frac{I_C}{c_n} + \frac{t}{c_n} \right)
        \]
        Similarly, the condition that $d_n \alpha \in I_D$ is equivalent to 
        \[
            \alpha \in J_{D,n} := \bigcup_{t = 0}^{d_n - 1} \left( \frac{I_D}{d_n} + \frac{t}{d_n} \right)
        \]
        
        The set $J_{C,n}$ consists of $c_n$ subintervals of length $\ell/c_n$. By \eqref{eq:mar-26-2}, every one of these subintervals contains a subintervals of $J_{D,n}$. Similarly, each subinterval of $J_{D,n}$ contains a subinterval of $J_{C, n+1}$. Therefore $\bigcap_{n \in \N} (J_{C,n} \cap J_{D,n})$ is non-empty and any $\alpha$ in this intersection will satisfies \eqref{eq:mar-26-1}. Our proof finishes.
    \end{proof}

\section{The analogue to Frantzikinakis' question for arbitrary sequences}
\label{sec:negative_to_frantzikinakis_question}
In this section, we prove \cref{prop:negative_to_frantzikiankis_question}. We recall its statement here for convenience.

\begin{customprop}{\ref{prop:negative_to_frantzikiankis_question}}
    There exists an increasing sequence of natural numbers $(r_n)_{n \in \N}$ and a  $1$-correlation $(a(n))_{n \in \N}$ such that for any almost periodic sequence $(\psi(n))_{n \in \N}$,
    \[
        \liminf_{N \to \infty} \sum_{n=1}^N |a(r_n) - \psi(r_n)| > 0.
    \]
\end{customprop}
\begin{proof}
    First, we recall some facts about Riesz product measures and Gaussian systems. For $c_j \in \mathbb{R}$ and $|c_j| \leq 1$, define the real valued function $P_N(t) = \prod_{j = 0}^{N-1} (1 + c_j \cos (3^j t))$ on $[0,1]$ for $N \in \mathbb{N}$. Let $\sigma_N$ be the measure on $\mathbb{T}$ defined as $d \sigma_N = P_N(t) d t$. Then $\sigma_N$ converges in weak* topology to a measure $\sigma$ whose Fourier coefficients satisfy the following: For $n \in \N$, there is a unique representation of $n$ in the form $n = \sum_{j} \epsilon_j 3^j$ where $\epsilon_j \in \{0, \pm 1\}$. Then
    \[
        \hat{\sigma}(n) = \prod_{\epsilon_j \neq 0} \frac{c_j}{2}.
    \]
    The measure $\sigma$ is called a Riesz product (see \cite[pages 5-7]{Queffelec1987}). 

    For any measure $\sigma$ on $\mathbb{T}$, there exists a system, called Gaussian system, $(X, \mathcal{X}, \mu, T)$ and $f, g \in L^2(\mu)$ such that $\int_X f(x) g(T^n x) \, d \mu(x) = \hat{\sigma}(n)$ for all $n \in \N$ (see \cite[pages 369-371]{Cornfeld_Fomin_Sinai82}). By approximating $L^2$-functions by $L^{\infty}$-functions, for any $\epsilon > 0$, there exists a $1$-correlation $(a(n))_{n \in \N}$ such that $|a(n) - \hat{\sigma}(n)| < \epsilon$ for all $n \in \N$. 
    
    Let $\{r_n\}_{n \in \N}$ be the sequence $\{3^n\}_{n \in \N} \cup \{3^n + n\}_{n \in \N}$ written in the increasing order. In the construction of Riesz product measures, let $\sigma$ be the measure corresponding to the sequence $c_j = 1$ for all $j \in \N$. Then 
    \[
        \hat{\sigma}(n) = \prod_{\epsilon_j \neq 0} \frac{1}{2}
    \]
    where $n = \sum_j \epsilon_j 3^j$ with $\epsilon_j \in \{0, \pm 1\}$. Thus $\hat{\sigma}(3^n) = 1/2$ and $\hat{\sigma}(3^n + n) \leq 1/4$ for all $n \in \N$. Passing to Gaussian systems, and approximating $L^2$-functions by $L^{\infty}$ functions, there exists a $1$-correlation $(a(n))_{n \in \N}$ such that $|a(n) - \hat{\sigma}(n)| < 1/16$ for all $n \in \N$. Therefore $|a(3^n + n) - a(3^n)| \geq 1/8$ for all $n \in \N$. 
    
    Let $(\psi(n))_{n \in \N}$ be an arbitrary almost periodic sequence. The set of $n \in \N$ such that $|\psi(3^n + n) - \psi(3^n)| < 1/16$ is syndetic (see characterization (3) in \cref{sec:almost_periodic}). For $n$ in that syndetic set,
    \begin{multline*}
        |a(3^n + n) - \psi(3^n + n)| + |a(3^n) - \psi(3^n)| \geq \\
        \left| (a(3^n + n) - a(3^n)) - (\psi(3^n + n) - \psi(3^n)) \right| > 1/16
    \end{multline*}
    Therefore,
    \begin{multline*}
        \liminf_{N \to \infty} \frac{1}{2N} \sum_{n=1}^{2N} \left|a(r_n) - \psi(r_n) \right| = \\
        \liminf_{N \to  \infty} \frac{1}{N} \sum_{n=1}^{N} \left( \left| a(3^n + n) - \psi(3^n + n) \right| + |a(3^n) - \psi(3^n)|\right) > 0.
    \end{multline*}
\end{proof}

\bibliography{refs}
\bibliographystyle{plain}
\end{document}